\documentclass[11pt]{article}
\usepackage{amssymb, amsmath, hyperref, array}

\setlength{\headheight}{-0.5cm} \setlength{\headsep}{0cm}
\setlength{\oddsidemargin}{-0.4cm}
\setlength{\evensidemargin}{-2.5cm} 
\setlength{\textheight}{22.2cm}
\setlength{\textwidth}{17cm}

\linespread{1.1}

\def\R{\mathbb{R}}
\def\N{\mathbb{N}}

\newtheorem{defn}{Definition}
\newtheorem{notn}[defn]{Notation}
\newtheorem{lemma}[defn]{Lemma}
\newtheorem{proposition}[defn]{Proposition}
\newtheorem{theorem}[defn]{Theorem}

\newtheorem{remark}[defn]{Remark}
\newtheorem{example}[defn]{Example}

\newenvironment{proof}[1]{
  \trivlist \item[\hskip \labelsep{\it #1}]}{\hfill\mbox{$\square$}
  \endtrivlist}

\parindent = 0cm
\parskip = 0.1cm

\title{A version of Putinar's Positivstellensatz for cylinders}

\date{}

\author{Paula Escorcielo\footnote
{{\scriptsize Partially supported by the Argentinian grants} {\footnotesize UBACYT 20020160100039BA} 
{\scriptsize and} {\footnotesize PIP 11220130100527CO CO\-NI\-CET}.
\newline \textbf{MSC Classification:} 12D15, 13J30, 14P10.
\newline \textbf{Keywords:} Putinar's Positivstellenatz, Sums of squares, Degree bounds.} 
\qquad \qquad
Daniel Perrucci$^{*}$
\\[3mm]
{\small Departamento de Matem\'atica, FCEN, Universidad de Buenos Aires, Argentina}\\ 
{\small  IMAS, CONICET--UBA, Argentina}
}

\begin{document}

\maketitle

\begin{abstract}
We prove that, under some additional assumption, Putinar's Positivstellensatz
holds on cylinders of type $S \times \R$ with $S = \{\bar x
 \in \R^n \ | \  g_1(\bar x) \ge 0, \dots, g_s(\bar x) \ge 0\}$ 
 such that the quadratic module generated by $g_1, \dots, g_s$ in $\R[X_1, 
 \dots, X_n]$ is archimedean, and we provide a degree bound for the representation 
 of a polynomial $f \in \R[X_1, \dots, X_n, Y]$ which is positive on $S \times \R$
 as an explicit element of the quadratic module generated by $g_1, \dots, g_s$ in $\R[X_1, 
 \dots, X_n, Y]$. We also include an example 
to show that an additional assumption is necessary for Putinar's Positivstellensatz
to hold on cylinders of this type. 
\end{abstract}

\section{Introduction}

Putinar's Positivstellensatz (\cite{Put}) is one of the most celebrated 
results in the theory of sums of squares and certificates of non-negativity. 
This theorem states that
given $g_1, \dots, g_s \in \R[\bar X] = \R[X_1, \dots, X_n]$ such that the quadratic module
$M(g_1, \dots, g_s)$ generated by $g_1, \dots, g_s$ in $\R[\bar X]$ is archimedean, every 
$f \in \R[\bar X]$ positive on 
$$
S = \{\bar x \in \R^n \, | \, g_1(\bar x) \ge 0, \dots, g_s(\bar x) \ge 0\}
$$
belongs to $M(g_1, \dots, g_s)$, which is a certificate of the non-negativity of
$f$.

We explain now the terminology in the preceding paragraph. 
A subset $M \subset \R[\bar X]$ is a quadratic module if it satisfies
\begin{itemize}
\item $1 \in M$,
\item $M + M \subset M$, 
\item $\R[\bar X]^2 M \subset M$ (i.e. $M$ is closed under multiplication by squares).
\end{itemize}
The set of sums of squares $\sum \R[\bar X]^2$ is the smallest quadratic module in $\R[\bar X]$. 
Given $g_1, \dots, g_s \in \R[\bar X]$, the quadratic module generated by these 
polynomials in $\R[\bar X]$ is 
$$
M(g_1, \dots, g_s) = \left\{\sigma_0 + \sigma_1g_1 + \dots + \sigma_sg_s \ | \ 
\sigma_0, \sigma_1, \dots, \sigma_s \in \sum \R[\bar X]^2\right\}
$$
and it is the smallest quadratic module in $\R[\bar X]$ which contains $g_1, \dots, g_s$. 
Every polynomial $f \in M(g_1, \dots, g_s)$ is non-negative on 
the set $S \subset \R^n$ 
but the converse is not true in general (see \cite[Example]{Ste}). 
A quadratic module $M$ in $\R[\bar X]$ is said to be archimedean if there exists 
$N \in \R_{>0}$ such that
$$
N - X_1^2 - \dots - X_n^2 \in M.
$$
If the quadratic module $M(g_1, \dots, g_s)$ is archimedean, 
then the set $S\subset \R^n$ is compact, but again, the converse is not true in 
general (see \cite[Example 4.6]{JacPres}).

Another of the most important results in the theory of sums of squares and certificates
of non-negativity is Schm\"udgen's Positivstellensatz (\cite{Schm}).
This theorem states that
given $g_1, \dots, g_s \in \R[\bar X]$ such that the set $S \subset \R^n$ is compact, 
every 
$f \in \R[\bar X]$ positive on $S$
belongs to the preordering $T(g_1, \dots, g_s)$
generated by $g_1, \dots, g_s$ in $\R[\bar X]$, which is a certificate of the non-negativity of
$f$.

As before,  we explain the terminology we have just used.  
A subset $T \subset \R[\bar X]$ is a preordering if it satisfies
\begin{itemize}
\item $\R[\bar X]^2 \subset T$,
\item $T + T \subset T$, 
\item $TT\subset T$ (i.e. $T$ is closed under multiplication).
\end{itemize}
The set of sums of squares $\sum \R[\bar X]^2$ is the smallest preordering in $\R[\bar X]$.
It is easy to see that $T \subset \R[\bar X]$ is a preordering if and only if it is a 
quadratic module and it is closed under multiplication.
Given $g_1, \dots, g_s \in \R[\bar X]$, the preordering generated by these 
polynomials in $\R[\bar X]$ is 
$$
T(g_1, \dots, g_s) = \Big\{\sum_{I \subset \{1, \dots, s \}}  \sigma_I \prod_{i \in I}g_i
 \ | \ 
\sigma_I \in \sum \R[\bar X]^2 \hbox{ for every } I \subset \{1, \dots, s \} \Big\}
$$
and it is the smallest preordering in $\R[\bar X]$ which contains $g_1, \dots, g_s$.
It is clear that $M(g_1, \dots, g_s)$ is included in $T(g_1, \dots, g_s)$. 
Every polynomial $f \in T(g_1, \dots, g_s)$ is non-negative on 
the set $S \subset \R^n$, 
but the converse is not true in general (again, see \cite[Example]{Ste}). 

Both Schm\"udgen's Positivstellensatz and Putinar's Positivstellensatz
provide a representation of a polynomial on a basic closed semialgebraic
set which makes evident the non-negativity of the polynomial. 
A natural question is if it is possible to bound the degrees of all the different terms  
in these representations. Answers to this question 
have been given by Schweighofer
(\cite[Theorem 3]{Schw}) 
in the case of Schm\"udgen's Positivstellensatz
and by  Nie and Schweighofer (\cite[Theorem 6]{NieSchw})
in the case of Putinar's Positivstellensatz. 
In the particular case where $S$ is the hypercube $[0, 1]^n$, 
improved bounds have been given in \cite{deKLau} and \cite{Mag}.
We include here the precise statement of \cite[Theorem 6]{NieSchw}, but 
we introduce first some useful definition already present in 
\cite{NieSchw}, \cite{Pow}, \cite{PR}, 
\cite{Schw}, etc.

\begin{defn}
For 
$$
f = \sum_{\substack{\alpha \in \N_{0}^n \\ |\alpha| \le d}} \binom{|\alpha|}{\alpha}a_\alpha \bar X^\alpha \in \R[\bar X]
$$ 
we consider the norm of $f$ defined by
$$
\| f \| = \max \{ |a_\alpha| \, | \, \alpha \in \N_{0}^n, |\alpha| \le d \};
$$
where for $\alpha = (\alpha_1, \dots, \alpha_n) \in  \N_{0}^n$, 
$$
|\alpha| = \alpha_1 + \dots + \alpha_n  
\qquad \qquad \hbox{ and } 
\qquad \qquad 
\binom{|\alpha|}{\alpha} = \frac{|\alpha|!}{\alpha_1!\dots \alpha_n!}.
$$
\end{defn}

Note that the definition of this norm is made in such a way that 
for every $d \in \N$, $\|(X_1 + \dots + X_n)^d\| = 1$. 

The precise statement of \cite[Theorem 6]{NieSchw} is the following.

\begin{theorem}[Putinar's Positivstellensatz with degree bound]\label{thm:NieSchw}
Let
$g_1, \dots, g_s \in \R[\bar X]$ 
such that 
$$
\emptyset \ne S  = \{\bar x \in \R^{n} \ | \ g_1(\bar x) \ge 0, \dots, g_s(\bar x) \ge 0\} \subset 
(-1, 1)^n
$$
and such that the quadratic module $M(g_1, \dots, g_s)$ is archimedean. 
There exists a positive constant $c$ such that for every $f \in \R[\bar X]$  
positive on $S$, if
$\deg f = d$ and
$\min\{f(\bar x) \,  | \, \bar x \in S \} = f^* > 0$,   
$f$ can be written as
$$
f = \sigma_0 + \sigma_1g_1 + \dots + \sigma_sg_s \in M(g_1, \dots, g_s)
$$
with $\sigma_0, \sigma_1,  \dots, \sigma_s \in \sum \R[\bar X]^2$ and 
$$
\deg(\sigma_0), \deg(\sigma_1g_1),  \dots, \deg(\sigma_sg_s)
\le 
c \, {\rm e}^{\left(\frac{\| f \|d^2n^d}{f^*}\right)^c}.
$$
\end{theorem}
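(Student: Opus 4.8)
Since this is precisely \cite[Theorem 6]{NieSchw}, I only outline the approach I would take. The plan is to reduce the statement to an \emph{effective} Schm\"udgen-type representation and then convert a preordering certificate into a quadratic module certificate, using the archimedean hypothesis in a quantitative way. As a first reduction, I would replace $f$ by $f/f^*$, so that $\min_S f = 1$; then $\norm{f}$ gets divided by $f^*$ while $\deg f$ is unchanged, which already explains the quotient $\norm{f}/f^*$ appearing in the bound. Next, using that $M(g_1,\dots,g_s)$ is archimedean, I would fix once and for all a representation $N - X_1^2 - \dots - X_n^2 \in M(g_1,\dots,g_s)$; since $S$ is compact and contained in the open cube $(-1,1)^n$, it is bounded away from the boundary of the cube, so each $g_i$ is bounded on $S$, say $0 \le g_i \le C$ on $S$. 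The constants $N$, $C$ and the degrees and norms of these fixed certificates depend only on $g_1,\dots,g_s$, hence may be absorbed into $c$.

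The core step is to invoke the degree bound for Schm\"udgen's Positivstellensatz, that is \cite[Theorem 3]{Schw}, applied to $f$ on the compact set $S$: this produces a representation $f = \sum_{I \subseteq \{1,\dots,s\}} \sigma_I \prod_{i\in I} g_i$ with $\sigma_I \in \sum\R[\bar X]^2$ and $\deg\big(\sigma_I\prod_{i\in I}g_i\big) \le c_1\,{\rm e}^{(\norm{f} d^2 n^d/f^*)^{c_1}}$ for a constant $c_1$ depending only on the $g_i$. What then remains is a \emph{conversion lemma}: under the archimedean hypothesis, every product $\prod_{i \in I} g_i$, and more generally every polynomial of controlled degree and norm, can be rewritten as an element of $M(g_1,\dots,g_s)$ at the cost of multiplying its degree by at worst an exponential factor in that degree. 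The mechanism I would use is an effective version of the classical fact that the set of ``bounded elements'' of an archimedean quadratic module is all of $\R[\bar X]$: for every $D$ and $B$ there should be an explicit $\Lambda = \Lambda(D,B)$ with $\Lambda \pm p \in M(g_1,\dots,g_s)$ whenever $\deg p \le D$ and $\norm{p} \le B$. Feeding the products $\prod_{i\in I}g_i$ through such a lemma, together with identities that express, e.g., $\sigma\,g_ig_j$ modulo $M(g_1,\dots,g_s)$ in terms of strictly lower-complexity data, one rewrites the whole Schm\"udgen certificate as an element of $M(g_1,\dots,g_s)$ with all degrees still bounded by $c\,{\rm e}^{(\norm{f} d^2 n^d/f^*)^{c}}$ after enlarging $c$.

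The main obstacle is exactly this conversion, and in particular the effective estimate on the archimedean constant $\Lambda(D,B)$: one must bound, in terms of $D$, $B$ and the fixed data $N, C, \dots$, the degree needed to certify $\Lambda \pm p \in M(g_1,\dots,g_s)$, and do so tightly enough that composing it with Schweighofer's (already doubly exponential) Schm\"udgen bound does not add a further tower of exponentials. A secondary technical point is that one must carry the sup-norms of the sos multipliers, not only their degrees, through every transformation, since the hypotheses of \cite[Theorem 3]{Schw} and of the conversion lemma are phrased in terms of both. An alternative route, avoiding Schm\"udgen, is to homogenize and substitute so that the positivity takes place on the standard simplex and then apply an effective P\'olya-type theorem (\cite{PR}), the change of variables contributing the factor $n^d$ and the P\'olya degree $\sim \deg f\cdot\norm{f}/f^*$ contributing the inner exponent; there the difficulty migrates to controlling the substitution that reduces a general archimedean system to the simplex.
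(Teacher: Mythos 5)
The paper does not prove this theorem; it is quoted verbatim as \cite[Theorem 6]{NieSchw} and used as a known result. So there is no in-paper proof to compare against, but your sketch does purport to reconstruct Nie--Schweighofer's argument, and it contains a genuine gap precisely where you flag the ``main obstacle.'' Your plan is to apply \cite[Theorem 3]{Schw} to $f$ on $S$ with the constraints $g_1,\dots,g_s$, obtaining $f=\sum_I\sigma_I\prod_{i\in I}g_i\in T(g_1,\dots,g_s)$, and then ``convert.'' For $\sigma_I\prod_{i\in I}g_i$ to land in $M(g_1,\dots,g_s)$ the natural route is to show $\prod_{i\in I}g_i\in M(g_1,\dots,g_s)$ (then closure under multiplication by sums of squares finishes it). But $\prod_{i\in I}g_i$ is only nonnegative on $S$, and typically vanishes somewhere on $S$, so Putinar's theorem gives no certificate, and in general $g_ig_j\notin M(g_i,g_j)$ --- this is exactly the Schm\"udgen/Putinar gap. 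Your proposed mechanism ($\Lambda\pm p\in M$ for $\deg p\le D$, $\|p\|\le B$) does not repair it: knowing $\Lambda\pm g_ig_j\in M$ does not place $\sigma g_ig_j$ in $M$, because manipulations like $\sigma g_ig_j=\sigma g_i(g_j-\Lambda)+\Lambda\sigma g_i$ multiply an $M$-element by $\sigma g_i$, which is in $M$ but is not a sum of squares, so closure fails. Also note that your statement ``every polynomial of controlled degree and norm can be rewritten as an element of $M$'' cannot be literally true: elements of $M$ are nonnegative on $S$.

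What Nie and Schweighofer actually do --- and what the present paper imitates in Proposition~\ref{prop:main} --- sidesteps this entirely. One subtracts $\lambda\sum_i g_i(g_i-1)^{2k}$ from $f$; each summand is visibly in $M(g_1,\dots,g_s)$ because $(g_i-1)^{2k}$ is a square, and by Remark~\ref{obs:cota_g} the subtracted term is uniformly small on $S$. Using a \L ojasiewicz estimate (Remark~\ref{obs:loja}) one chooses $\lambda,k$ so that the remainder $h$ is positive on a \emph{fixed} box or simplex $K\supset S$ with $S\subset K^{\circ}$. The effective Schm\"udgen/P\'olya bound is then applied to $h$ on $K$ with $K$'s own defining linear polynomials, and the resulting products of those constraints are finitely many fixed polynomials that are \emph{strictly} positive on $S$ (because $S\subset K^{\circ}$); each therefore has a fixed Putinar certificate in $M(g_1,\dots,g_s)$ (cf.\ Remark~\ref{obs:exis}), whose degrees depend only on $g_1,\dots,g_s$ and are absorbed into the constant $c$. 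This is exactly the ``alternative route'' you mention in your last sentence --- homogenize, pass to the simplex, apply the effective P\'olya bound of \cite{PR} --- and it, not the direct Schm\"udgen-on-$S$ plus conversion, is the working argument.
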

In the degree bound above, ${\rm e} = 2.718...$ is the base of the natural logarithm. 
Note that the constant $c$ depends on $g_1, \dots, g_s$ but it is independent of $f$. 

The problem of representing 
positive polynomials as sums of squares 
for cylinders with compact cross-section has been studied within the more
general framework of the moment problem in 
\cite{KuhlMars}, \cite{KuhlMarsSchw} and \cite{Pow}.
Under some extra mild assumption, in \cite[Theorem 3]{Pow} Powers obtains an extension of
Schm\"udgen's 
Positivstellensatz to cylinders of
type $S \times F$ with $S \subset \R^n$ a 
compact semialgebraic set and $F \subset \R$ an unbounded closed semialgebraic set. 
The precise extra assumption under consideration is the following.
\begin{defn}\label{def:fullymic} 
Let $f \in \R[\bar X, Y]$,  $m = \deg_Y f$ and $S \subset \R^n$. The polynomial $f$ is \emph{fully} $m$-\emph{ic}
on $S$ if for every $\bar x \in S$, $f(\bar x, Y) \in \R[Y]$ has degree $m$. 
\end{defn}
In other words, the condition of being fully $m$-ic on $S$ is that, when the variable $Y$ is distinguished, the leading 
coefficient (which is a polynomial in $\R[\bar X]$) does not vanish on $S$. 
To obtain \cite[Theorem 3]{Pow}, given $f \in \R[\bar X, Y]$ a positive polynomial on $S \times F$, the idea is to 
consider the variable $Y$ 
as a parameter
and to produce a uniform version of
\cite[Theorem 3]{Schw}, in such a way that all the representations obtained for all the  specializations of
$Y$
can be glued together to obtain the desired representation for $f$.

In this paper we borrow and combine many ideas and techniques from \cite{NieSchw}, 
\cite{Pow} and \cite{Schw} to extend Putinar's Positivstellensatz to cylinders of type $S \times \R$, again under the extra assumption
in Definition \ref{def:fullymic}. 
Before stating our main result, we introduce some definition and notation.

\begin{defn}
For 
$$
f= \sum_{0 \le i \le m} \, \sum_{\substack{\alpha \in \N_{0}^n\\|\alpha| \le d}} \binom{|\alpha|}{\alpha} a_{\alpha,i} \bar{X}^\alpha Y^i \in \R[\bar{X},Y]
$$
we consider another norm of $f$ defined by 
 $$
 \| f \|_{\bullet}= \max\{|a_{\alpha,i}| \, | \, 0 \le  i \le m, \alpha \in \N_0^n, |\alpha| \le d\}.
 $$
\end{defn}

\begin{notn}\label{not:homog}
For 
$$
f = \sum_{0 \le i \le m} f_i(\bar X) Y^i \in \R[\bar{X},Y]
$$ 
with $f_m \ne 0$, we note by 
$$
\bar f = \sum_{0 \le i \le m} f_i(\bar X) Y^iZ^{m-i} \in \R[\bar{X},Y, Z]
$$ 
its homogenization with respect to the variable $Y$. 
\end{notn}

Let $\emptyset \ne S \subset \R^n$ be a compact set and let $f \in \R[\bar X, Y]$. 
If $f$ is fully $m$-ic on $S$ and $f > 0$ on $S \times \R$, it is clear
that $m$ is even and $f_m >0$ on $S$.  It can also be easily seen that if 
we take 
$$
C = \{(y, z) \in \R^2, y^2 + z^2 = 1\}
$$ 
then $\bar f > 0$ on the compact set $S \times C$.

\begin{notn}
For $g_1, \dots, g_s \in \R[\bar X]$, 
we denote
$$
M_{\R[\bar X, Y]}(g_1, \dots, g_s) = \left\{ \sigma_0 + \sigma_1g_1 + \dots + \sigma_sg_s \, | \, 
\sigma_0, \sigma_1, \dots, \sigma_s \in \sum \R[\bar X, Y]^2\right\}$$
the quadratic module generated by $g_1, \dots, g_s$ in $\R[\bar X, Y]$. 
\end{notn}
Note that we keep the notation $M(g_1, \dots, g_s)$ for 
the quadratic module generated by $g_1, \dots, g_s$ in $\R[\bar X]$. 
We state now our main theorem.

\begin{theorem}\label{thm:main}
Let
$g_1, \dots, g_s \in \R[\bar X]$ 
such that 
$$
\emptyset \ne S  = \{\bar x \in \R^{n} \ | \ g_1(\bar x) \ge 0, \dots, g_s(\bar x) \ge 0\} \subset (-1, 1)^n
$$
and such that the quadratic module $M(g_1, \dots, g_s)$ is archimedean. 
There exists a positive constant $c$ such that for every $f \in \R[\bar X, Y]$ positive on $S \times \R$, 
if $\deg_{\bar X} f = d$, $\deg_{Y} f = m$ with $f$ fully $m$-ic on $S$ and 
$$
\min\{\bar f(\bar x, y, z) \,  | \, \bar x \in S, (y, z) \in C \} = f^{\bullet} >0, 
$$ 
$f$ can be written as
$$
f = \sigma_0 + \sigma_1g_1 + \dots + \sigma_sg_s \in M_{\R[\bar X, Y]}(g_1, \dots, g_s)
$$
with $\sigma_0, \sigma_1,  \dots, \sigma_s \in \sum \R[\bar X, Y]^2$ and 
$$
\deg(\sigma_0), \deg(\sigma_1g_1),  \dots, \deg(\sigma_sg_s) 
\le c (m+1)2^{\frac{m}{2}}  {\rm e}^{ \left( \frac{\| f \|_{\bullet}(m+1)d^2(3n)^d}{f^{\bullet}} \right)^{c} }.
$$
\end{theorem}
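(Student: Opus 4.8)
The plan is to reduce Theorem \ref{thm:main} to the compact case handled by Theorem \ref{thm:NieSchw} by the classical trick of projectivizing the distinguished variable $Y$. Concretely, given $f \in \R[\bar X, Y]$ positive on $S \times \R$ and fully $m$-ic on $S$, I would pass to the homogenization $\bar f \in \R[\bar X, Y, Z]$ and work on the compact set $S \times C$, where $C = \{(y,z) : y^2 + z^2 = 1\}$. The point of using $C$ is that $\bar f(\bar x, y, z)$ restricted to the circle carries exactly the same information as $f(\bar x, Y)$ over all of $\R$ (the line at infinity $z = 0$ being picked up precisely because $f$ is fully $m$-ic), and on $S \times C$ the polynomial $\bar f$ attains a positive minimum $f^\bullet$. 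So the first step is to set up a new tuple of constraint polynomials describing $S \times C$ inside $\R[\bar X, Y, Z]$: keep $g_1, \dots, g_s$, and add the two inequalities coming from $z^2 + y^2 - 1 \ge 0$ and $1 - y^2 - z^2 \ge 0$ to encode the circle. One must check that the quadratic module these generate in $\R[\bar X, Y, Z]$ is again archimedean — this follows easily since $M(g_1,\dots,g_s)$ already is archimedean in $\R[\bar X]$ and $1 - Y^2 - Z^2$ handles the two new variables — and that after a harmless rescaling the semialgebraic set sits inside an open cube $(-1,1)^{n+2}$ as required by Theorem \ref{thm:NieSchw}.

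The second step is to apply Theorem \ref{thm:NieSchw} to $\bar f$ on this enlarged compact set. That yields a representation
$$
\bar f = \tau_0 + \tau_1 g_1 + \dots + \tau_s g_s + \tau_{s+1}(1 - Y^2 - Z^2) + \tau_{s+2}(Y^2 + Z^2 - 1)
$$
with $\tau_j \in \sum \R[\bar X, Y, Z]^2$ and with degree bounds governed by $\deg \bar f$, $\|\bar f\|$ and $f^\bullet$. Since $\bar f$ is homogeneous of degree $m$ in $(Y,Z)$ and has $\bar X$-degree $d$, its total degree is $d + m$, and $\|\bar f\|$ relates to $\|f\|_\bullet$ up to the multinomial factors built into the norm; tracking these is where the explicit shape of the final bound, with its $(m+1)$, $2^{m/2}$ and $(3n)^d$ factors, will emerge. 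The third step is to dehomogenize: substitute $Z = 1$ — equivalently, evaluate the identity at $Z=1$ — to return to $\R[\bar X, Y]$. The terms $\tau_{s+1}(1 - Y^2 - Z^2)$ and $\tau_{s+2}(Y^2 + Z^2 - 1)$ do not simply vanish at $Z = 1$, so the heart of this step is to show that their contribution, namely $(\tau_{s+1}|_{Z=1} - \tau_{s+2}|_{Z=1})(- Y^2)$ plus cross terms, can be absorbed: one shows that $\bar f|_{Z=1} = f$ and that the "circle" terms, evaluated at $Z=1$, land back in the quadratic module $M_{\R[\bar X,Y]}(g_1,\dots,g_s)$ — for instance by exploiting that $\tau_{s+1}$ and $\tau_{s+2}$ both appear and that combining the original positivity of $f$ on $S\times\R$ with a direct sum-of-squares identity lets one rewrite the leftover as $\sigma_0' + \sum \sigma_i' g_i$. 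This is the technical crux; it is essentially the cylinder-gluing idea from \cite{Pow} adapted to the quadratic-module setting, and the full $m$-ic hypothesis is exactly what makes it go through.

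The main obstacle I expect is precisely this dehomogenization step: unlike Schm\"udgen's preordering, a quadratic module is not closed under products, so one cannot cavalierly multiply representations together, and the naive substitution $Z=1$ destroys the homogeneity that made the circle constraints behave well. The careful bookkeeping needed is to choose, among the possible Positivstellensatz certificates on $S \times C$, one whose "line at infinity" terms are controlled — perhaps by first replacing $\bar f$ by $\bar f + \varepsilon(Y^2+Z^2)^{m/2}$ or by symmetrizing in $Z \mapsto -Z$ to kill odd powers — so that at $Z = 1$ everything reassembles into an element of $M_{\R[\bar X,Y]}(g_1,\dots,g_s)$ of the claimed degree. A secondary but routine obstacle is the norm comparison: one must carefully relate $\|\bar f\|$ (with the multinomial weights on $n+2$ variables) to $\|f\|_\bullet$, and similarly track how the degree bound in $n+2$ variables from Theorem \ref{thm:NieSchw} specializes; the factor $(3n)^d$ versus $n^d$ and the $2^{m/2}$ both come out of this comparison together with the rescaling of the cube. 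Once those are in hand, collecting the surviving sums of squares into $\sigma_0, \sigma_1, \dots, \sigma_s$ and reading off their degrees completes the argument.
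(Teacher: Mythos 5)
The proposal has a genuine gap precisely at the step you flag as ``the technical crux,'' and the degree bound you would extract does not match the one claimed, so the reduction to Theorem~\ref{thm:NieSchw} on $S\times C$ does not go through. After applying Theorem~\ref{thm:NieSchw} to $\bar f$ on $S\times C$ (described by $g_1,\dots,g_s$, $1-Y^2-Z^2$, $Y^2+Z^2-1$), substituting $Z=1$ leaves the terms $\tau_{s+1}|_{Z=1}\cdot(-Y^2)$ and $\tau_{s+2}|_{Z=1}\cdot Y^2$; the first is a sum of squares times $-Y^2$, which is not a priori an element of $M_{\R[\bar X,Y]}(g_1,\dots,g_s)$. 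Asserting that these ``can be absorbed'' is not an argument: a quadratic module is not closed under the cancellations you would need, and nothing forces $(\tau_{s+1}-\tau_{s+2})|_{Z=1}$ to be a sum of squares or to land in the module; symmetrizing $Z\mapsto -Z$ does not help since all terms are already even in $(Y,Z)$. This is exactly the obstruction the authors point to when they say they must use \cite[Theorem 1]{PR} directly rather than \cite[Theorem 6]{NieSchw} or \cite[Theorem 3]{Schw} as packaged theorems. Their workaround is structural: subtract $\lambda(Y^2+Z^2)^{m/2}\sum_i g_i(g_i-1)^{2k}$ from $\bar f$ to get $h$ positive on all of $\widetilde\Delta_n\times C$, homogenize $h$ in the $\bar X$-variables only by introducing $X_0$, apply P\'olya's theorem with the Powers--Reznick bound, and then dehomogenize by $X_0=1-X_1-\cdots-X_n$, $Z=1$. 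After that substitution every summand visibly lies in $M_{\R[\bar X,Y]}(g_1,\dots,g_s)$: the subtracted term does because $(Y^2+1)^{m/2}$ and $(g_i-1)^{2k}$ are squares, and each P\'olya monomial $(1-X_1-\cdots-X_n)^{\alpha_0}\bar X^{\bar\alpha}$ comes with a coefficient $b_\alpha(Y,1)\in\sum\R[Y]^2$ and is handled by a fixed, precomputed Putinar certificate for $(1-X_1-\cdots-X_n)^{v_0}\bar X^{\bar v}$ (Remark~\ref{obs:exis}). None of this bookkeeping has an analogue on the Putinar-on-the-circle route.

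There is also a quantitative mismatch that you dismiss as ``routine'' but that is in fact fatal. Applying Theorem~\ref{thm:NieSchw} to $\bar f$, a polynomial in $n+2$ variables of total degree $d+m$, yields a bound with $(n+2)^{d+m}$ inside the exponent of the exponential, i.e.\ doubly exponential in both $d$ and $m$. The bound claimed in Theorem~\ref{thm:main} has $(3n)^d$ and only the linear factor $(m+1)$ inside the exponent, with the remaining $m$-dependence confined to the prefactor $(m+1)2^{m/2}$. These shapes do not coincide. The paper achieves the better $m$-dependence precisely by keeping the $(Y,Z)$-homogenization out of the P\'olya step: the $(Y,Z)$-part contributes only the modest $2^{m/2}$ (from expanding $(Y^2+Z^2)^{m/2}$), while P\'olya acts only on the $\bar X$-variables. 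Your reduction mixes the two and cannot reproduce the claimed bound.
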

As in Theorem \ref{thm:NieSchw},  the constant $c$ depends on $g_1, \dots, g_s$ but it is independent of $f$. 
Note that, when $m = 0$, this is to say, $f \in \R[\bar X]$, the bound in Theorem \ref{thm:main} 
is of similar type to the bound in
Theorem \ref{thm:NieSchw}. 
Actually, in Remark \ref{obs:3^d} we see that if $n \ge 2$, the factor $3^d$ in the exponent can be hidden in the constant $c$ and therefore the bound in 
Theorem \ref{thm:main} is of the same type to the bound in Theorem \ref{thm:NieSchw}.

Theorem \ref{thm:main} is basically Putinar's Positivstellensatz 
under the additional assumption
that $f$ is fully $m$-ic on $S$. Next example, which is a variation 
of \cite[Example]{Ste} shows that 
either this one or some other additional assumption is indeed necessary. 

\begin{example}
Take $g_1 = (1 - X^2)^3 \in \R[X]$,  then $S = [-1, 1] \subset \R$ and $M(g_1)$ is archimedean since
$$
\frac43 - X^2 = \frac43X^2\Big(X^2 - \frac32\Big)^2 + \frac43\Big(1-X^2\Big)^3.
$$
Now take $f(X, Y) = (1 - X^2)Y^2 + 1 \in \R[X, Y]$. It is clear that $f > 0$ in 
$S \times \R$ but $f$ is not fully $2$-ic on $S$.
If $f \in M_{\R[X, Y]}(g_1)$, we have an identity
\begin{equation}\label{eq:example}
(1 - X^2)Y^2 + 1 = \sum_{1 \le j \le s}\Big( \sum_{0 \le i \le m'}p_{ji}(X)Y^i \Big)^2 + 
\sum_{1 \le j \le s}\Big( \sum_{0 \le i \le m'}q_{ji}(X)Y^i \Big)^2
\big(1 - X^2\big)^3
\end{equation}
with 
at least one of 
$p_{1m'}, \dots, p_{sm'}, q_{1m'}, \dots, q_{sm'}$ not identically zero. 
Looking at the degree in $Y$ at both sides of (\ref{eq:example}), we have $m' \ge 1$. 

If $m' \ge 2$, looking at the terms of degree $2m'$ in $Y$ at both sides of 
(\ref{eq:example}) we have
$$
0 = \sum_{1 \le j \le s}p_{jm'}(X)^2  + 
\sum_{1 \le j \le s} q_{jm'}(X)^2\big(1 - X^2\big)^3
$$
but this is impossible since the polynomial on the right hand side is positive in $[-1, 1]$
with the only possible exception of a finite number of points. 
Indeed, any point in $(-1,1)$ such that the polynomial on the right 
hand side vanishes at, should be a common root of 
$p_{1m'}, \dots, p_{sm'}, q_{1m'}, \dots, q_{sm'}$; but at least one of these 
polynomials is not identically zero and therefore has a finite number of roots.

If $m' =1$, looking at the terms of degree $2$ in $Y$ at both sides of 
(\ref{eq:example}) we have
$$
1-X^2 = \sum_{1 \le j \le s_1}p_{j1}(X)^2  + 
\sum_{1 \le j \le s_2} q_{j1}(X)^2\big(1 - X^2\big)^3 \in M(g_1)
$$
and this is impossible since it is 
exactly the well-known example from \cite[Example]{Ste}. 
\end{example}

As in \cite[Theorem 3]{Pow}, the general idea to prove Theorem \ref{thm:main} is, 
given $f \in \R[\bar X, Y]$ a positive polynomial on $S \times \R$, to consider the variable $Y$ as a parameter and to produce, 
this time, a uniform version 
of Theorem \ref{thm:NieSchw} (\cite[Theorem 6]{NieSchw}), in such a way that all the representations 
obtained for all the specializations of $Y$
can be glued together to obtain the desired representation for $f$. 

Actually, the proof of \cite[Theorem 6]{NieSchw} uses \cite[Theorem 3]{Schw}, and the proof of 
\cite[Theorem 3]{Schw} uses the bound for P\'olya's Theorem from \cite[Theorem 1]{PR}.
For us, in order to succeed to prove Theorem \ref{thm:main} following the described strategy, we need to reorganize
these ideas in a way that we use directly \cite[Theorem 1]{PR} without going through \cite[Theorem 3]{Schw} as a packaged theorem
(even though we use ideas from its proof). 

Since P\'olya's Theorem plays such a significant role, we first prove 
the following auxiliary proposition, which is a 
a version of Theorem \ref{thm:main} under the extra 
assumption that the set $S$ is included in the interior of a convenient simplex. 
Then, Theorem \ref{thm:main} is obtained by simply composing with a linear change of variables.

\begin{notn} For $n \in \N$, we denote by $\widetilde{\Delta}_n$ the simplex 
$$
\widetilde{\Delta}_n= \Big\{ \bar x \in \R^n \ | \  \displaystyle \sum_{1 \le i \le n} x_i \leq 1 \hbox{ and } x_i \geq 0 
\hbox{ for } 1 \le i \le n \Big\}.
$$
\end{notn}

\begin{proposition}\label{prop:main}
Let $g_1, \dots, g_s \in \R[\bar{X}]$ such that 
$$
\emptyset \ne S = \{\bar x \in \R^{n} \ | \ g_1(\bar x) \ge 0, \dots, g_s(\bar x) \ge 0\} \subset {\widetilde{\Delta}_n}^{\circ}
$$
and such that the quadratic module $M(g_1, \dots, g_s)$ is archimedean. 
There exists a positive constant $c$ such that for every $f \in \R[\bar X, Y]$ positive on $S \times \R$, 
if $\deg_{\bar X} f = d$, $\deg_{Y} f = m$ with $f$ fully $m$-ic on $S$ and 
$$
\min\{\bar f(\bar x, y, z) \,  | \, \bar x \in S, (y, z) \in C \} = f^{\bullet} >0, 
$$ 
$f$ can be written as
$$
f = \sigma_0 + \sigma_1g_1 + \dots + \sigma_sg_s \in M_{\R[\bar X, Y]}(g_1, \dots, g_s)
$$
with $\sigma_0, \sigma_1,  \dots, \sigma_s \in \sum \R[\bar X, Y]^2$ and 
$$
\deg(\sigma_0), \deg(\sigma_1g_1),  \dots, \deg(\sigma_sg_s)
\le 
c (m+1)2^{\frac{m}{2}}  {\rm e}^{\left( \frac{\|  f \|_{\bullet}(m+1)d^2}{f^{\bullet}} \right)^{c}}.
$$
\end{proposition}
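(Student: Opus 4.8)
The plan is to adapt the proof of \cite[Theorem 6]{NieSchw} so that it runs uniformly in the extra variables $(Y,Z)\in C$, and then to specialize and glue as in \cite[Theorem 3]{Pow}; working with a set inside a simplex is precisely what lets us invoke the bound for Pólya's Theorem \cite[Theorem 1]{PR} without a linear change of variables (the latter being what produces the factor $(3n)^d$ in Theorem \ref{thm:main}). Concretely, I would first homogenize $f$ with respect to $Y$: since $f$ is fully $m$-ic on $S$ and positive on $S\times\R$, the degree $m$ is even, $f_m>0$ on $S$, and $\bar f$ (Notation \ref{not:homog}) is homogeneous of degree $m$ in $(Y,Z)$ with $\bar f(\bar x,1,0)=f_m(\bar x)>0$, $\bar f(\bar x,0,1)=f(\bar x,0)>0$, and $\min_{\bar x\in S,(y,z)\in C}\bar f=f^{\bullet}>0$; by homogeneity in $(Y,Z)$ this forces $\bar f(\bar x,y,z)>0$ for all $\bar x\in S$ and all $(y,z)\neq(0,0)$, so $\bar f(\bar X,y,z)$ is positive on $S\subset\widetilde{\Delta}_n^{\circ}$ for every $(y,z)$ on the segment $\{Y+Z=1,\ Y,Z\ge 0\}$, and $\bar f(\bar x,Y,Z)$ is a positive definite binary form for each $\bar x\in S$. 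As $f(\bar X,Y)=\bar f(\bar X,Y,1)$, it suffices to represent $\bar f$ in the quadratic module generated by $g_1,\dots,g_s$ in $\R[\bar X,Y,Z]$ with the stated degree bounds, with $(Y,Z)$ treated as parameters, and then set $Z=1$.

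Homogenizing $\bar f$ also with respect to $\bar X$ through a variable $X_0$ subject to $X_0+X_1+\dots+X_n=1$ moves the relevant set into $\widetilde{\Delta}_n$. One cannot feed $\bar f$ directly into \cite[Theorem 1]{PR}, because $\bar f$ is positive over $S$ and not over all of $\widetilde{\Delta}_n$; bridging this gap is exactly what the archimedean hypothesis is for. Following the proofs of \cite[Theorem 3]{Schw} and \cite[Theorem 6]{NieSchw}, I would use a representation of $N-X_1^2-\dots-X_n^2$ in $M(g_1,\dots,g_s)$ whose degree is bounded by a constant depending only on $g_1,\dots,g_s$, together with the lower bound $f^{\bullet}$, to replace $\bar f$ by an auxiliary form to which \cite[Theorem 1]{PR} does apply, the discrepancy being an explicit element of the quadratic module; applying \cite[Theorem 1]{PR} to the resulting form in $(X_0,\dots,X_n)$ with coefficients in $\R[Y,Z]$ gives a Pólya exponent governed by the ratio of the relevant norm to the relevant minimum, which is the source of the factor ${\rm e}^{(\|f\|_{\bullet}(m+1)d^2/f^{\bullet})^{c}}$, while $(m+1)2^{\frac{m}{2}}$ comes from comparing the $\|\cdot\|$-type norm of $\bar f$ with $\|f\|_{\bullet}$ (the multinomial weights in the $(Y,Z)$-block are at most $2^m$) and from writing positive definite binary forms of degree $m$ as short sums of squares of forms of degree $\frac{m}{2}$. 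I expect this step to be the main obstacle: one cannot use \cite[Theorem 3]{Schw} and \cite[Theorem 6]{NieSchw} as black boxes, since every intermediate sum of squares and every intermediate multiplier must depend polynomially on $Y,Z$ with degree bounded independently of the specialization, so that after clearing $X_0$ and $Z$ the pieces reassemble into genuine elements of $\sum\R[\bar X,Y]^2$ — which forces one to pass through \cite[Theorem 1]{PR} directly, carrying $(Y,Z)$ along as coefficients.

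Finally, I would carry out the passage from the Pólya-type representation to the quadratic module as in the proof of \cite[Theorem 6]{NieSchw}: each monomial in $X_0,\dots,X_n,Y,Z$ occurring there is, after dehomogenizing, turned into an element of the quadratic module using the archimedean inequalities and the inclusion $S\subset\widetilde{\Delta}_n^{\circ}$, which places $X_i$ and $1-X_1-\dots-X_n$ — hence the products one needs — into $M(g_1,\dots,g_s)$ with degrees independent of $f$ by the qualitative Putinar Positivstellensatz. Summing all pieces and setting $Z=1$ yields $f=\sigma_0+\sigma_1g_1+\dots+\sigma_sg_s$ with $\sigma_j\in\sum\R[\bar X,Y]^2$; the constant $c$ absorbs all the $f$-independent degrees accumulated from these archimedean representations, and composing the Pólya step with this conversion makes the bound an iterated exponential, exactly as in \cite[Theorem 6]{NieSchw}.
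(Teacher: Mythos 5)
Your outline correctly identifies the architecture of the argument: homogenize $f$ with respect to $Y$, carry $(Y,Z)$ along as parameters through a direct application of P\'olya's theorem in the variables $(X_0,\bar X)$, observe that the resulting coefficients $b_\alpha(Y,Z)$ are nonnegative binary forms (hence sums of squares after dehomogenization), and use Putinar's qualitative Positivstellensatz for the finitely many sign-pattern products $(1-X_1-\dots-X_n)^{v_0}\bar X^{\bar v}$, $v\in\{0,1\}^{n+1}$, to push everything into $M_{\R[\bar X,Y]}(g_1,\dots,g_s)$. This matches the paper.

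However, the crucial quantitative step is described incorrectly. You propose to ``use a representation of $N-X_1^2-\dots-X_n^2$ in $M(g_1,\dots,g_s)$ \dots to replace $\bar f$ by an auxiliary form to which \cite[Theorem 1]{PR} does apply.'' This is not how the positivity of $\bar f$ is extended from $S$ to all of $\widetilde{\Delta}_n$, and a representation of $N-\sum X_i^2$ does not accomplish this. The mechanism in the paper (taken from Schweighofer's proof of the Schm\"udgen bound and from \cite{NieSchw}) is to consider
$$
h \;=\; \bar f \;-\; \lambda\,(Y^2+Z^2)^{\frac m2}\sum_{1\le i\le s} g_i\,(g_i-1)^{2k},
$$
and to tune $\lambda$ and $k$ quantitatively (using the \L ojasiewicz inequality for ${\rm dist}(\cdot,S)$ versus $\min_i g_i$, and the elementary bound $t(t-1)^{2k}\le\frac{1}{2k+1}$ on $[0,1]$) so that $h\ge\frac12 f^\bullet$ on all of $\widetilde{\Delta}_n\times C$. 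P\'olya is then applied to the $\bar X$-homogenization of $h$, not of $\bar f$; the subtracted term is itself manifestly in the quadratic module, with the extra factor $(Y^2+Z^2)^{m/2}$ inserted precisely to keep everything bihomogeneous in $(Y,Z)$ and to contribute the $2^{m/2}$ in the bound via its $\|\cdot\|_\bullet$-norm. The archimedean hypothesis enters only through Remark~\ref{obs:exis}, i.e.\ through fixed-degree Putinar representations of the $2^{n+1}$ sign-pattern monomials, not through $N-\sum X_i^2$. Without the $g_i(g_i-1)^{2k}$ device, your plan has no route from ``$\bar f>0$ on $S\times C$'' to ``a form positive on $\widetilde{\Delta}_n\times C$.''

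A secondary issue: your closing claim that ``composing the P\'olya step with this conversion makes the bound an iterated exponential, exactly as in \cite[Theorem 6]{NieSchw}'' contradicts the statement you are proving. The whole point of Proposition~\ref{prop:main}, as the paper emphasizes, is that working inside $\widetilde{\Delta}_n$ lets one apply P\'olya directly and obtain a bound that is \emph{singly} exponential in $d$; the factor $(3n)^d$ (and with it the double exponential) appears only in Theorem~\ref{thm:main} after the affine change of variables. Also, minor but worth noting, restricting to the segment $\{Y+Z=1,\ Y,Z\ge0\}$ is not enough: to conclude that each $b_\alpha(Y,1)$ is a sum of squares you need $b_\alpha\ge0$ on all of $C$ (equivalently on $\R^2$), which is why the paper works with the full circle $C$ rather than the positive simplex in $(Y,Z)$.
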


A nice fact about the bound in Proposition \ref{prop:main} is 
that it is singly exponential in $d$, meanwhile
the bound in Theorem \ref{thm:main} is doubly
exponential in $d$. This could be of independent interest even in the 
case $m = 0$, this is to say, $f \in \R[X]$ positive on $S$.

\section{Proof of the main result}

As said in the introduction, to prove Theorem \ref{thm:main} we borrow and combine 
many ideas and
techniques from \cite{NieSchw}, \cite{Pow} and \cite{Schw}. 
We start this section by quoting some results from these papers.
We will also use many other general ideas from these sources which is not 
possible to quote independently but we want to give them credit for.

The following two auxiliary results come from \cite[Remark 12]{NieSchw}
and \cite[Proposition 14]{NieSchw}.

\begin{remark} \label{obs:cota_g}
For every $k \in \N$ and $t \in [0,1]$, 
 $$
 t \cdot (t-1)^{2k} \leq \frac{1}{2k+1}.
 $$
\end{remark}

\begin{proposition}\label{lema:norma}
Let $g_1, \dots, g_s \in \R[\bar X] \setminus\{0\}$. Then
$$
\|g_1\dots g_s\| \leq (\deg g_1 +1) \dots (\deg g_s +1) \|g_1 \| \dots \|g_s \|.
$$
If in addition $g_1, \dots, g_s$ are homogeneous, then 
$$
\|g_1\dots g_s\| \leq \|g_1 \| \dots \|g_s \|.
$$
\end{proposition}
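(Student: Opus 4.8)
The plan is to establish the statement for $s=2$ and then induct on $s$. So first I would write $f = \sum_{\alpha} \binom{|\alpha|}{\alpha} a_\alpha \bar X^\alpha$ and $g = \sum_{\beta} \binom{|\beta|}{\beta} b_\beta \bar X^\beta$ with $|a_\alpha| \le \|f\|$ and $|b_\beta| \le \|g\|$, so that the coefficient of $\bar X^\gamma$ in the product $fg$ equals $\binom{|\gamma|}{\gamma}\,c_\gamma$ where
$$
c_\gamma = \frac{1}{\binom{|\gamma|}{\gamma}} \sum_{\alpha + \beta = \gamma} \binom{|\alpha|}{\alpha} \binom{|\beta|}{\beta} a_\alpha b_\beta,
$$
whence
$$
|c_\gamma| \le \|f\|\,\|g\| \cdot \frac{1}{\binom{|\gamma|}{\gamma}} \sum_{\alpha + \beta = \gamma} \binom{|\alpha|}{\alpha} \binom{|\beta|}{\beta},
$$
the sum being effectively restricted to $|\alpha| \le \deg f$ and $|\beta| \le \deg g$, since otherwise $a_\alpha$ or $b_\beta$ vanishes.

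The combinatorial core of the argument is the identity
$$
\binom{|\gamma|}{\gamma} = \sum_{\substack{\alpha + \beta = \gamma \\ |\alpha| = k}} \binom{k}{\alpha} \binom{|\gamma| - k}{\beta}, \qquad 0 \le k \le |\gamma|,
$$
which I would prove by comparing the coefficient of $\bar X^\gamma$ on the two sides of the trivial equality $(X_1 + \dots + X_n)^{|\gamma|} = (X_1 + \dots + X_n)^{k}(X_1 + \dots + X_n)^{|\gamma| - k}$. Grouping the sum $\sum_{\alpha+\beta=\gamma}\binom{|\alpha|}{\alpha}\binom{|\beta|}{\beta}$ according to the value $k = |\alpha|$, each admissible $k$ contributes exactly $\binom{|\gamma|}{\gamma}$, and the number of admissible $k$ (those with $0 \le k \le \deg f$ and $0 \le |\gamma| - k \le \deg g$) is at most $\min(\deg f,\deg g) + 1 \le (\deg f + 1)(\deg g + 1)$. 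This yields $|c_\gamma| \le (\deg f + 1)(\deg g + 1)\|f\|\,\|g\|$ and hence the bound for $s = 2$. When $f$ and $g$ are homogeneous only the single value $k = \deg f$ occurs (and $c_\gamma = 0$ unless $|\gamma| = \deg f + \deg g$), so the same computation gives $\|fg\| \le \|f\|\,\|g\|$.

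To pass to general $s$ I would induct, writing $g_1 \cdots g_s = (g_1 \cdots g_{s-1})\,g_s$, applying the case $s = 2$ together with the inductive hypothesis, and absorbing the factor $\deg(g_1 \cdots g_{s-1}) + 1 = \deg g_1 + \dots + \deg g_{s-1} + 1$ by means of the elementary inequality $a_1 + \dots + a_m + 1 \le (a_1 + 1)\cdots(a_m + 1)$ for nonnegative integers, itself a one-line induction. For homogeneous polynomials the induction is even more direct, since a product of nonzero homogeneous polynomials is homogeneous. I expect the only genuine step to be the multinomial identity above and the counting of admissible $k$; everything else is bookkeeping, and I would just check in passing that allowing some $g_i$ to be a nonzero constant (so $\deg g_i = 0$) causes no trouble, which it does not, since all the inequalities remain correct.
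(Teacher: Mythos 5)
This proposition is quoted from \cite[Proposition 14]{NieSchw} and the paper does not reprove it, so there is no in-paper argument to compare against; I will review your proof on its own merits. Your $s=2$ computation is sound: the multinomial identity plus the count of admissible $k$ is exactly the right device, and in fact you showed slightly more than you claimed, namely $\|fg\| \le (\min(\deg f, \deg g)+1)\,\|f\|\,\|g\|$.

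The induction as you describe it, however, does not close. Applying the stated $s=2$ bound to $(g_1\cdots g_{s-1})g_s$ and then the inductive hypothesis gives
$\|g_1\cdots g_s\| \le \bigl(\deg(g_1\cdots g_{s-1})+1\bigr)(\deg g_s+1)\prod_{i<s}(\deg g_i+1)\prod_i\|g_i\|$,
which carries the unwanted extra factor $\deg(g_1\cdots g_{s-1})+1$. ``Absorbing'' it via $\deg(g_1\cdots g_{s-1})+1 \le \prod_{i<s}(\deg g_i+1)$ runs the wrong way: that replaces the offending factor by something larger, landing you at $\prod_{i<s}(\deg g_i+1)^2\,(\deg g_s+1)$ rather than $\prod_i(\deg g_i+1)$. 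The repair is already in your hands: feed the sharper form of the $s=2$ bound into the induction. Then the factor in question is $\min\bigl(\deg(g_1\cdots g_{s-1}), \deg g_s\bigr)+1 \le \deg g_s+1$, and the inductive hypothesis finishes it with no absorption step at all. (Alternatively, run the combinatorial argument once for $s$ factors: grouping by the tuple $(|\alpha_1|,\dots,|\alpha_s|)$ with $\sum_i|\alpha_i|=|\gamma|$ and $|\alpha_i|\le\deg g_i$, each admissible tuple contributes $\binom{|\gamma|}{\gamma}$, and for any fixed $j$ there are at most $\prod_{i\ne j}(\deg g_i+1)$ such tuples.)
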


Next remark is present in the proof of \cite[Lemma 13]{NieSchw}
and is similar to a remark made in the proof of 
\cite[Lemma 9]{Schw}.

\begin{remark}\label{obs:loja}
Let $g_1, \dots, g_s \in \R[\bar{X}]$ such that 
$$
\emptyset \ne S  = \{\bar x \in \R^{n} \ | \ g_1(\bar x) \ge 0, \dots, g_s(\bar x) \ge 0\} \subset {\widetilde{\Delta}_n}^{\circ}.
$$
By \L{}ojasiewicz inequality (see \cite[Corollary  2.6.7]{BCR}), there exist 
$c_1,c_2>0$ such that for every $\bar x \in \widetilde{\Delta}_n$,
$$
{\rm dist}(\bar{x},S)^{c_1} \leq -c_2 \min\{g_1(\bar{x}),\cdots,g_s(\bar{x}),0 \}.
$$
In particular, for $x \in {\widetilde{\Delta}_n} \setminus S$,  there exists $i_0$ with $1  \le i_0 \le s$ 
such that $g_{i_0}(\bar{x}) < 0$ and
$$
{\rm dist}(\bar{x},S)^{c_1} \leq -c_2 g_{i_0}(\bar{x}).
$$
\end{remark}

The following remark will be useful in the proof of our main result
and the idea of using a Putinar representation for a finite number of 
fixed polynomials comes from the proof of \cite[Theorem 3]{Schw}.

\begin{remark}\label{obs:exis}
Let $g_1, \dots, g_s \in \R[\bar{X}]$ such that 
$$
\emptyset \ne S = \{\bar x \in \R^{n} \ | \ g_1(\bar x) \ge 0, \dots, g_s(\bar x) \ge 0\} \subset {\widetilde{\Delta}_n}^{\circ}
$$
and $M(g_1, \dots, g_s)$ is archimedean.
Since for each $v = (v_0, \bar v) \in \{0,1\}^{n+1}$
$$
(1-X_1- \cdots -X_n)^{v_0} \bar{ X}^{\bar v} > 0 \hbox{ in } S,
$$
by Putinar's Positivstellensatz (\cite{Put}), there exist $\sigma_{v 0}, \sigma_{v 1}, \dots,  \sigma_{v s} \in  
\sum \R[\bar X]^2$ such that 
$$
(1-X_1- \cdots -X_n)^{v_0} \bar{X}^{\bar v} = \sigma_{v 0} + \sigma_{v 1}g_1 + \dots + \sigma_{v s}g_s.
$$
\end{remark}

We need to extend the definition of $\|{\ }\|_\bullet$ to polynomials
homogeneous in $(Y, Z)$ as follows. 
\begin{defn}
For 
$$
h= \sum_{0 \le i \le m} \, 
\sum_{\substack{\alpha \in \N_{0}^n \\ |\alpha| \le d}} \binom{|\alpha|}{\alpha} a_{\alpha,i} \bar{X}^\alpha Y^iZ^{m-i} \in \R[\bar{X},Y, Z]
$$
we consider the new norm defined also for $h$  by 
$$
 \| h \|_{\bullet}= \max\{|a_{\alpha,i}| \, | \, 0 \le  i \le m, \alpha \in \N_0^n, |\alpha| \le d\}.
$$
\end{defn}
Note that  $\|\bar f \| = \| f \|$ for every $f \in \R[\bar X, Y]$.

Next auxiliary lemma will be useful to prove the degree bound from 
Theorem \ref{thm:main}. We use 
the notation $C =  \{(y, z) \in \R^2 \ | \ y^2 + z^2 = 1\}$ which we introduced before
and we keep for the rest of the paper. 

\begin{lemma}\label{lem:norma_div_min}
Let $f \in \R[\bar{X},Y]$ such that $\deg_{\bar{X}}f=d$ and 
$\deg_{Y} f=m$. 
For every $\bar x \in \widetilde \Delta_n$, and $(y, z) \in C$,
$$
|\bar f(\bar x, y, z)| \le \| f\|_{\bullet} (m+1)(d+1).
$$
\end{lemma}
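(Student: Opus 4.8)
The plan is to bound $|\bar f(\bar x, y, z)|$ directly from the coefficient expansion of $\bar f$, using the triangle inequality together with the facts that the monomials appearing are all evaluated at points of the simplex $\widetilde\Delta_n$ and of the unit circle $C$. Write
$$
\bar f = \sum_{0 \le i \le m} \, \sum_{\substack{\alpha \in \N_0^n \\ |\alpha| \le d}} \binom{|\alpha|}{\alpha} a_{\alpha,i} \bar X^\alpha Y^i Z^{m-i},
$$
so that $|a_{\alpha,i}| \le \|f\|_\bullet$ for all $(\alpha, i)$ by definition of the norm. First I would estimate the contribution of the $Y,Z$ part: for $(y,z) \in C$ one has $|y| \le 1$ and $|z| \le 1$, hence $|y^i z^{m-i}| \le 1$ for every $i$ with $0 \le i \le m$. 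Next I would estimate the $\bar X$ part: for $\bar x \in \widetilde\Delta_n$ all coordinates $x_j$ lie in $[0,1]$ and $\sum_j x_j \le 1$, so $\sum_{|\alpha| = k} \binom{k}{\alpha} \bar x^\alpha = (x_1 + \dots + x_n)^k \le 1$ for each $k$ with $0 \le k \le d$; in particular $\sum_{|\alpha| \le d} \binom{|\alpha|}{\alpha} \bar x^\alpha \le d+1$.

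Combining these, for fixed $i$ I would bound
$$
\Bigl| \sum_{|\alpha| \le d} \binom{|\alpha|}{\alpha} a_{\alpha,i} \bar x^\alpha \, y^i z^{m-i} \Bigr|
\le \|f\|_\bullet \sum_{|\alpha| \le d} \binom{|\alpha|}{\alpha} \bar x^\alpha
\le \|f\|_\bullet (d+1),
$$
and then summing over the $m+1$ values of $i$ from $0$ to $m$ gives
$$
|\bar f(\bar x, y, z)| \le \sum_{0 \le i \le m} \|f\|_\bullet (d+1) = \|f\|_\bullet (m+1)(d+1),
$$
which is exactly the claimed inequality. The whole argument is a one-step triangle-inequality estimate once the two elementary observations are in place; there is no real obstacle. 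The only point requiring a moment's care is the normalization of the norm $\|{\ }\|_\bullet$, which is defined relative to the multinomial-weighted monomial basis, so it is essential to use the identity $\sum_{|\alpha| = k} \binom{k}{\alpha} \bar x^\alpha = (x_1 + \dots + x_n)^k$ (the multinomial theorem) rather than trying to count monomials individually; this is precisely the identity for which the remark after the first Definition, that $\|(X_1 + \dots + X_n)^d\| = 1$, was recorded.
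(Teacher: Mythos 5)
Your proposal is correct and follows essentially the same route as the paper's own proof: a single triangle-inequality estimate, using $|y^iz^{m-i}|\le 1$ on $C$, the multinomial identity $\sum_{|\alpha|=k}\binom{k}{\alpha}\bar x^\alpha=(x_1+\cdots+x_n)^k\le 1$ on $\widetilde\Delta_n$, and then summing $d+1$ degree levels over $m+1$ values of $i$. The paper simply writes out the nested sums in a single chain of inequalities rather than bounding the inner sum first for fixed $i$, but the content is identical.
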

\begin{proof}{Proof:}
Suppose
$$
f= \sum_{0 \le i \le m} \, 
\sum_{\substack{\alpha \in \N_{0}^n \\ |\alpha| \le d}} 
\binom{|\alpha|}{\alpha} a_{\alpha,i} \bar{X}^\alpha Y^i.
$$
Then
\begin{equation*}
 \begin{split}
|\bar f(\bar x, y, z)| & \leq
\sum_{0\leq i \leq m} \sum_{\substack{ \alpha \in \N_0^n \\ |\alpha| \leq d}} \binom{|\alpha|}{\alpha} |a_{\alpha,i}| \bar{x}^\alpha |y|^i |z|^{m-i} \\
& \leq \| f \|_{\bullet}\sum_{0\leq i \leq m} \sum_{\substack{ \alpha \in \N_0^n \\ |\alpha| \leq d}} \binom{|\alpha|}{\alpha}\bar{x}^\alpha  \\
& = \| f \|_{\bullet}\sum_{0\leq i \leq m} \sum_{0\leq j\leq d} \sum_{\substack{ \alpha \in \N_0^n \\ |\alpha| =j}} \binom{|\alpha|}{\alpha}\bar{x}^\alpha \\
& = \| f \|_{\bullet}\sum_{0\leq i \leq m} \sum_{0\leq j\leq d} (x_{1}+ \cdots +x_{n})^j \\
&\leq  \| f \|_{\bullet}\sum_{0\leq i \leq m} \sum_{0\leq j\leq d} 1 =\|f\|_{\bullet}(m+1)(d+1).
 \end{split}
\end{equation*}
\end{proof}

The following lemma is an adaptation from \cite[Lemma 11]{NieSchw}.

\begin{lemma}\label{lema_desig}
Let $f \in \R[\bar{X},Y]$ such that $\deg_{\bar{X}}f=d$ and 
$\deg_{Y} f=m$. For every 
$\bar{x}_1,\bar{x}_2 \in \widetilde{\Delta}_n$ and $(y,z) \in C$, 
 $$
 |\bar f(\bar{x}_1,y,z)-\bar f(\bar{x}_2,y,z)| \leq \frac{1}{2} \sqrt{n} \| f \|_{\bullet}(m+1)d(d+1) \| \bar{x}_1-\bar{x}_2\|.
 $$
\end{lemma}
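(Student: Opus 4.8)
The plan is to treat $(y,z) \in C$ as a fixed parameter and prove a Lipschitz bound for the polynomial function $\bar x \mapsto \bar f(\bar x, y, z)$ on $\widetilde\Delta_n$ with constant $\tfrac12\sqrt n\,\|f\|_\bullet(m+1)d(d+1)$. Since $\widetilde\Delta_n$ is convex, the segment joining any $\bar x_1,\bar x_2 \in \widetilde\Delta_n$ stays inside $\widetilde\Delta_n$, so by the mean value theorem and the Cauchy--Schwarz inequality it suffices to bound the Euclidean norm of the $\bar X$-gradient of $\bar f(\,\cdot\,,y,z)$ at every point of $\widetilde\Delta_n$ by that constant. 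In turn, since the Euclidean norm of a vector in $\R^n$ is at most $\sqrt n$ times its sup norm, it is enough to bound each partial derivative $\partial \bar f/\partial X_k$ on $\widetilde\Delta_n \times C$ by $\tfrac12\|f\|_\bullet(m+1)d(d+1)$.

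First I would write $f = \sum_{0 \le i \le m}\sum_{|\alpha| \le d}\binom{|\alpha|}{\alpha}a_{\alpha,i}\bar X^\alpha Y^i$, so that $\bar f = \sum_{0 \le i \le m}\sum_{|\alpha| \le d}\binom{|\alpha|}{\alpha}a_{\alpha,i}\bar X^\alpha Y^iZ^{m-i}$, and differentiate in $X_k$. For $\bar x \in \widetilde\Delta_n$ and $(y,z) \in C$ one has $x_j \ge 0$, $x_1+\dots+x_n \le 1$, and $|y|,|z| \le 1$, hence
$$
\Big|\frac{\partial \bar f}{\partial X_k}(\bar x,y,z)\Big| \le \|f\|_\bullet \sum_{0 \le i \le m}\ \sum_{\substack{\alpha \in \N_0^n \\ |\alpha|\le d}}\binom{|\alpha|}{\alpha}\alpha_k\,\bar x^{\alpha - e_k}.
$$
The computational core is the multinomial identity $\binom{|\alpha|}{\alpha}\alpha_k = |\alpha|\binom{|\alpha|-1}{\alpha-e_k}$ for $\alpha_k \ge 1$, combined with the multinomial theorem: for each fixed $j = |\alpha| \ge 1$,
$$
\sum_{\substack{|\alpha|=j\\ \alpha_k \ge 1}}\binom{j}{\alpha}\alpha_k\,\bar x^{\alpha-e_k} = j\sum_{|\beta|=j-1}\binom{j-1}{\beta}\bar x^{\beta} = j\,(x_1+\dots+x_n)^{j-1} \le j,
$$
while the layer $j=0$ contributes nothing because of the factor $\alpha_k$. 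Summing over $1 \le j \le d$ and $0 \le i \le m$ gives $\bigl|\tfrac{\partial \bar f}{\partial X_k}(\bar x,y,z)\bigr| \le \|f\|_\bullet(m+1)\sum_{j=1}^d j = \tfrac12\|f\|_\bullet(m+1)d(d+1)$, as required.

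Combining the two steps, $\|\nabla_{\bar X}\bar f(\bar x,y,z)\| \le \sqrt n \cdot \tfrac12\|f\|_\bullet(m+1)d(d+1)$ for every $\bar x \in \widetilde\Delta_n$ and $(y,z) \in C$, and then the mean value theorem along the segment from $\bar x_2$ to $\bar x_1$ yields the stated inequality. I do not expect any genuine obstacle here; the only points requiring care are the index bookkeeping in the identity $\binom{|\alpha|}{\alpha}\alpha_k = |\alpha|\binom{|\alpha|-1}{\alpha-e_k}$ together with the reindexing $\beta = \alpha - e_k$, and keeping track that the norm on $\bar x_1 - \bar x_2$ is Euclidean so that the factor is $\sqrt n$ rather than $n$.
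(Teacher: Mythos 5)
Your proof is correct: the mean value theorem plus the gradient bound $\bigl|\partial\bar f/\partial X_k\bigr| \le \tfrac12\|f\|_\bullet(m+1)d(d+1)$ on $\widetilde\Delta_n\times C$ (via the identity $\binom{|\alpha|}{\alpha}\alpha_k = |\alpha|\binom{|\alpha|-1}{\alpha-e_k}$, the multinomial theorem, and $0\le x_1+\dots+x_n\le1$, $|y|,|z|\le1$) gives exactly the stated estimate. The paper itself gives no proof and simply calls the lemma ``an adaptation from [Nie--Schweighofer, Lemma 11]''; your argument is the expected one and matches that source.
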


We include also a technical lemma similar to \cite[Lemma 15]{NieSchw}. 

\begin{lemma}\label{lem:ctes}
Given $(c_1,c_2,c_3,c_4, c_5, c_6) \in \R_{\ge 0}^6$, there exists a positive constant
$c$ such that for every $r \in \R_{\ge 0}$,
$$
c_1r^{c_2}
\le c \,  {\rm e}^{r^{c}}
\qquad
\hbox{ and }
\qquad
c_3r^{c_4}
{\rm e}^{c_5r^{c_6}} \le c \, {\rm e}^{r^{c}}.
$$
\end{lemma}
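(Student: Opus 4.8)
The plan is to first produce suitable multiplicative constants for a \emph{provisional} exponent by a soft boundedness argument, and then to enlarge that exponent once so that a single value of $c$ can simultaneously serve as the exponent in $r^{c}$ and as the outer factor $c\,(\cdot)$ demanded by the statement.

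First I would fix $c^{\flat}=\max\{c_2,\,c_6,\,1\}+1$; the only features that matter are $c^{\flat}\ge 1$ and, crucially, $c^{\flat}>c_2$ and $c^{\flat}>c_6$. Consider on $[0,\infty)$ the functions
$$
\psi_1(r)=c_1 r^{c_2}\,{\rm e}^{-r^{c^{\flat}}},\qquad \psi_2(r)=c_3 r^{c_4}\,{\rm e}^{c_5 r^{c_6}}\,{\rm e}^{-r^{c^{\flat}}}.
$$
Both are continuous, including at $r=0$ thanks to $c_i\ge 0$. As $r\to\infty$ we have $\psi_1(r)\to 0$ because $c^{\flat}>c_2$, and $\psi_2(r)\to 0$ because, on taking logarithms, $\log c_3+c_4\log r+c_5 r^{c_6}-r^{c^{\flat}}\to-\infty$ as soon as $c^{\flat}>c_6$ (the term $-r^{c^{\flat}}$ dominates both $c_5 r^{c_6}$ and $c_4\log r$; if $c_3=0$ then $\psi_2\equiv 0$ and there is nothing to check). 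A continuous function on $[0,\infty)$ with a finite limit at infinity is bounded, so there exist $M_1,M_2\ge 1$, depending only on $c_1,\dots,c_6$, with $\psi_1\le M_1$ and $\psi_2\le M_2$; equivalently,
$$
c_1 r^{c_2}\le M_1\,{\rm e}^{r^{c^{\flat}}}\qquad\text{and}\qquad c_3 r^{c_4}\,{\rm e}^{c_5 r^{c_6}}\le M_2\,{\rm e}^{r^{c^{\flat}}}\qquad\text{for all }r\ge 0.
$$

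Finally I would take $c=\max\{c^{\flat},\,M_1{\rm e},\,M_2{\rm e}\}$, which is positive since $c\ge c^{\flat}\ge 1$. It remains to check the routine inequality $M_i\,{\rm e}^{r^{c^{\flat}}}\le c\,{\rm e}^{r^{c}}$ for all $r\ge 0$: for $r\ge 1$ one has $r^{c^{\flat}}\le r^{c}$ and $M_i\le c$, while for $r\in[0,1]$ one has ${\rm e}^{r^{c^{\flat}}}\le{\rm e}$ and ${\rm e}^{r^{c}}\ge 1$, so $M_i\,{\rm e}^{r^{c^{\flat}}}\le M_i{\rm e}\le c\le c\,{\rm e}^{r^{c}}$. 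Chaining this with the two displayed inequalities gives both assertions of the lemma for this $c$.

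There is no serious obstacle here: the only real content is the limit computation showing that ${\rm e}^{r^{c^{\flat}}}$ outgrows $r^{c_4}{\rm e}^{c_5 r^{c_6}}$ once $c^{\flat}>c_6$, and the one mildly delicate point — the reason for the final enlargement step and the small $r\le 1$ versus $r\ge 1$ bookkeeping — is that the statement requires the \emph{same} constant $c$ to appear in the exponent and as the outer multiplier.
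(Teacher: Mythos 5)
Your proof is correct. The paper itself gives no argument for this lemma (it only notes it is ``similar to [Lemma 15]{NieSchw}'' and leaves the verification to the reader), so there is no in-paper proof to compare against; your two-step scheme --- first bound $c_1 r^{c_2} {\rm e}^{-r^{c^{\flat}}}$ and $c_3 r^{c_4} {\rm e}^{c_5 r^{c_6} - r^{c^{\flat}}}$ by continuity on $[0,\infty)$ plus vanishing at infinity once $c^{\flat} > \max\{c_2,c_6\}$, then enlarge once to a single $c \ge \max\{c^{\flat}, M_1{\rm e}, M_2{\rm e}\}$ and split into $r \le 1$ and $r \ge 1$ to move the multiplicative slack into the exponent --- is a clean and complete elementary argument. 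The one genuinely non-mechanical point, that the statement forces the same constant to serve as exponent and as prefactor, is exactly the one you flag and handle correctly.
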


Before proving Proposition \ref{prop:main} we include some more notation.

\begin{notn} For $n \in \N$, we denote, as usual, by $\Delta_n$ the simplex
$$ 
\Delta_n= \Big\{(x_0, \bar x)\in \R^{n+1} \ | \ \displaystyle \sum_{0 \le i \le n} x_i=1 \hbox{ and } x_i \geq 0 
\hbox{ for }  0 \le i \le n \Big\}.
$$
\end{notn}

We are ready to prove Proposition \ref{prop:main}.

\begin{proof}{Proof of Proposition \ref{prop:main}:}
Without loss of generality we suppose $\deg g_i \ge 1$ and  $|g_i|\leq 1$ in $\widetilde \Delta_n$ for $1 \le i \le s$. 

%d No me convence lo de introducir una nueva constante \tilde c... 
% Pruebo algo distinto.  
To prove the result, we take such a polynomial $f \in \R[\bar X, Y]$ and we
need to show that we can find a constant $c$ which works independently from $f$. 
If $d = 0$ then $f \in \R[Y]$ is positive on $\R$ and it is well-known that $f \in
\sum \R[Y]^2$. 
Moreover, we can write $f$ as a sum of squares with 
the degree of each square bounded by $m$ (see \cite[Proposition 1.2.1]{Marshall_book}),
and then the degree bound simply holds for any constant $c \ge 1$.
So from now
we suppose $d \ge 1$ and if the final constant $c$ we find turns out to be less than $1$, we just
replace it by the result of applying Lemma \ref{lem:ctes} to 
the $6$-uple $(1, 0, c, 0, 1, c)$.  The new constant $c$ in particular satisfies for every $r \in \R_{\ge0}$,
$$
1 \le c \,  {\rm e}^{r^{c}}
$$
and taking $r = 0$ we have $c \ge 1$. 

We prove first that there exist 
$\lambda \in \R_{>0}$ and $k \in \N_0$ 
such that
$$
h= \bar{f}- \lambda\big( Y^2 + Z^2 \big)^{\frac{m}2} 
\sum_{1 \leq i \leq s} g_i \cdot (g_i-1)^{2k}  \in \R[\bar{X},Y,Z]
$$
satisfies $h \ge \frac12 f^{\bullet}$ in $\widetilde{\Delta}_n \times C$.

For each $(y,z) \in C$ we consider 
$$
A_{y,z}=\left\{\bar{x} \in \widetilde{\Delta}_n \ | \ \bar{f}(\bar x, y, z) \leq \frac{3}{4}f^{\bullet}\right\}.
$$
Note that $A_{y, z} \cap S = \emptyset$. 

To exhibit sufficient conditions for $\lambda$ and $k$, we consider separately the cases 
$\bar{x} \in \widetilde{\Delta}_n \setminus A_{y,z}$ and 
$\bar{x} \in A_{y,z}$.

If $\bar{x} \in \widetilde{\Delta}_n \setminus A_{y,z}$, by Remark \ref{obs:cota_g},
\begin{equation*}
\begin{split}
 h(\bar{x},y,z) & =\bar{f}(\bar{x},y,z)- 
 \lambda \big( y^{2} + z^{2} \big)^{\frac{m}2} 
  \sum_{1 \leq i \leq s} g_i(\bar{x}) \cdot (g_i(\bar{x})-1)^{2k} \\
  & \ge \bar{f}(\bar{x},y,z)- 
 \lambda  
  \sum_{1 \leq i \leq s} |g_i(\bar{x})| \cdot (|g_i(\bar{x})|-1)^{2k} \\
 &> \frac{3}{4}f^{\bullet} - 
  \frac{\lambda s}{2k+1}. 
 \end{split}
\end{equation*}
Therefore the condition $h(\bar{x},y,z) \ge \frac 12 f^{\bullet}$ is ensured if
\begin{equation}\label{eq:cond_k}
2k+1\ge \frac{4 \lambda  s}{f^{\bullet}}.
\end{equation}

If $\bar{x} \in A_{y,z}$, for any $\bar{x}_0 \in S$, by Lemma \ref{lema_desig} we have
$$
\frac{f^{\bullet}}{4} \leq \bar{f}(\bar{x}_0, y, z)-\bar{f}(\bar{x}, y, z) \leq \frac{1}{2} \sqrt{n} \| f \|_{\bullet}(m+1)d(d+1) \| \bar{x}_0-\bar{x}\|,
$$
then
$$
\frac{f^{\bullet}}{2\sqrt{n} \| f \|_{\bullet}(m+1)d(d+1)} 
\leq \| \bar{x}_0-\bar{x}\| 
$$
and therefore
\begin{equation}\label{eq:dist}
\frac{f^{\bullet}}{2\sqrt{n} \| f \|_{\bullet}(m+1)d(d+1)} \leq \mbox{dist}(\bar{x},S).
\end{equation}

By Remark \ref{obs:loja}, there exist $c_1, c_2 > 0$ and 
$1 \le i_0 \le s$ such that $g_{i_0}(\bar x) < 0$ and
\begin{equation}\label{eq:loja}
\mbox{dist}(\bar x, S)^{c_1} \le -c_2g_{i_0}(\bar x).
\end{equation}
By (\ref{eq:dist}) and (\ref{eq:loja}) we have
\begin{equation}\label{desig_6}
 g_{i_0}(\bar{x})\leq - \delta.
\end{equation}
with 
$$
\delta= \frac{1}{c_2} \left( \frac{f^{\bullet}}{2\sqrt{n}\| f \|_{\bullet}(m+1)d(d+1)} \right)^{c_1} > 0.
$$

On the other hand, defining $f^{\bullet}_{y,z}= 
\min\{\bar f(\bar x, y, z) \,  | \, \bar x \in S \}$,
again by Lemma \ref{lema_desig} we have that
\begin{equation}\label{desig_7}
|\bar{f}(\bar{x},y,z)-f^{\bullet}_{y,z}| \leq \frac{1}{2} \sqrt{n}\| f \|_{\bullet}(m+1)d(d+1) 
\mbox{diam} (\widetilde{\Delta}_n)
= \frac{1}{\sqrt{2}} \sqrt{n}\| f \|_{\bullet}(m+1)d(d+1).
\end{equation}

Then, by Remark \ref{obs:cota_g} and using (\ref{desig_6}) and (\ref{desig_7}) we have 
\begin{equation*}
 \begin{split}
 h(\bar{x},y,z)& \geq \bar{f}(\bar{x},y,z)
 -\lambda g_{i_0}(\bar{x})(g_{i_0}(\bar{x})-1)^{2k} - 
  \frac{\lambda(s-1)}{2k+1} \\
 & \geq \bar{f}(\bar{x},y,z) -f^{\bullet}_{y,z} +f^{\bullet}_{y,z} + 
 \lambda  \delta - \frac{\lambda(s-1)}{2k+1}  \\
 & \geq - \frac{1}{\sqrt{2}} \sqrt{n}\|f \|_{\bullet}(m+1)d(d+1) 
 + f^{\bullet} + \lambda \delta - \frac{\lambda(s-1)}{2k+1}.
 \end{split}
\end{equation*}
Finally, the condition $h(\bar{x},y,z) \geq \frac12f^{\bullet}$ is ensured if
\begin{equation}\label{eq:cond_lambda}
\lambda \ge \frac{ \sqrt{n}\|f \|_{\bullet}(m+1)d(d+1)}{\sqrt{2}\delta}  = 
\frac{ c_2 2^{c_1} (\sqrt{n}\| f \|_{\bullet}(m+1)d(d+1))^{c_1+1}}{\sqrt{2}{f^{\bullet}}^{c_1}}
\end{equation}
and 
\begin{equation}\label{eq:aux_inutil}
2k+1 \ge \frac{2\lambda(s-1)}{f^{\bullet}}. 
\end{equation}
Since (\ref{eq:cond_k}) implies (\ref{eq:aux_inutil}), it is enough for $\lambda$ and $k$ to satisfy (\ref{eq:cond_k}) and (\ref{eq:cond_lambda}). 
So for the rest of the proof we take
$$
\lambda = \frac{ c_2 2^{c_1} (\sqrt{n}\| f \|_{\bullet}(m+1)d(d+1))^{c_1+1}}{\sqrt{2}{f^{\bullet}}^{c_1}} 
= c_3\frac{(\| f \|_{\bullet}(m+1)d(d+1))^{c_1+1}}{{f^{\bullet}}^{c_1}}
> 0$$ 
with $c_3 = \frac{c_22^{c_1}\sqrt{n}^{c_1+1}}{\sqrt{2}}$
and
$$
k = \left\lceil \frac12 \left( \frac{4\lambda s}{f^{\bullet}} -1\right) \right\rceil \in \N_0.
$$
In this way, 
\begin{equation}\label{eq:cota_k}
\begin{split}
 k & \le   \frac12 \left( \frac{4\lambda s}{f^{\bullet}} -1\right) +1  \\
   & = 2c_3s \left( \frac{\| f \|_{\bullet}(m+1)d(d+1)}{f^{\bullet}} \right)^{c_1+1}+\frac12 \\
   & \leq c_4 \left( \frac{\| f \|_{\bullet}(m+1)d(d+1)}{f^{\bullet}} \right)^{c_1+1}
 \end{split}
\end{equation}
with $c_4 = 2c_3s + 1$. Here (and also several times after here) we use Lemma \ref{lem:norma_div_min} to ensure
$$\frac{\| f \|_{\bullet}(m+1)(d+1)}{f^{\bullet}} \ge 1.$$

Also, if we define $\ell = \deg_{\bar X} h$, we have
\begin{equation}\label{cota_e}
 \begin{split}
\ell & \leq \max \{d, (2k+1)\max_{1 \le i \le s} \deg g_i  \} \\ 
& \leq \max \left\{
d,  
\left(2c_4\left( \frac{\| f \|_{\bullet}(m+1)d(d+1)}{f^{\bullet}} \right)^{c_1+1} +1 \right) 
\max_{1 \le i \le s} \deg g_i
\right\} \\
 &
\le
c_5 \left( \frac{\| f\|_{\bullet}(m+1)d(d+1)}{f^{\bullet}} \right)^{c_1+1}
 \end{split}
\end{equation}
with $c_5 = (2c_4 + 1)\displaystyle{\max_{1 \le i \le s} \deg g_i}$.

On the other hand, using Proposition \ref{lema:norma} and (\ref{eq:cota_k}), 
\begin{equation}\label{cota_h}
 \begin{split}
\|h\|_{\bullet} 
&
\le  {\| f \|_{\bullet}} + \lambda s  2^{\frac{m}{2}}   
\max_{1 \le i \le s} \{(\deg g_i +1) (\|g_i\| +1)\} ^{2k+1}   \\[3mm]
& =  
\| f \|_{\bullet} +  
c_3s2^{\frac{m}2}\frac{ (\| f \|_{\bullet}(m+1)d(d+1))^{c_1+1}}{{f^{\bullet}}^{c_1}}
\max_{1 \le i \le s} \{(\deg g_i +1) (\|g_i\| +1)\}^{2k+1} \\[3mm]
& \le 
(c_3s+1)\max_{1 \le i \le s} \{(\deg g_i +1) (\|g_i\| +1)\} \ \cdot 
\\
& \ \ \ \cdot \
2^{\frac{m}2}\frac{ (\| f \|_{\bullet}(m+1)d(d+1))^{c_1+1}}{{f^{\bullet}}^{c_1}}
\max_{1 \le i \le s} \{(\deg g_i +1) (\|g_i\| +1)\}^{2c_4 \left( \frac{\| f \|_{\bullet}(m+1)d(d+1)}{f^{\bullet}} \right)^{c_1+1}}
\\[3mm]
& =
c_62^{\frac{m}2}
\frac{ (\| f \|_{\bullet}(m+1)d(d+1))^{c_1+1}}{{f^{\bullet}}^{c_1}}
{\rm e}^{c_7 \left( \frac{\| f \|_{\bullet}(m+1)d(d+1)}{f^{\bullet}} \right)^{c_1+1} }
\end{split}
\end{equation}
with 
$c_6 = (c_3s + 1)\displaystyle{\max_{1 \le i \le s} \{(\deg g_i +1) (\|g_i\| +1)\}}$ 
and 
$c_7 = \log \left(\displaystyle{\max_{1 \le i \le s} \{(\deg g_i +1) (\|g_i\| +1)\}^{2c_4}}\right)$.

So far we have found $\lambda$ and $k$ such that 
that $h \ge \frac12 f^{\bullet}$ in $\widetilde \Delta_n \times C$, together with bounds for 
$k, \ell = \deg_{\bar X} h$ and $\|h\|_\bullet$. 
Now, 
we introduce a new variable $X_0$ with the aim of  
homogenize with respect to the variables $\bar X$ and be able to 
use P\'olya's Theorem.
Let 
$$
h=  \sum_{0 \leq i \leq m}  \sum_{0 \leq j \leq \ell} h_{ij}(\bar{X}) Y^i Z^{m-i} 
$$
with $h_{ij} \in \R[\bar X]$ equal to zero or homogeneous of degree  $j$ for $0 \le i\le  m$ and 
$0 \le j \le \ell$. 
We define
$$
H= \sum_{0 \leq i \leq m}  \sum_{0 \leq j \leq \ell} h_{ij}(\bar{X})
(X_0+X_1 \cdots +X_n)^{\ell-j}  Y^i Z^{m-i} \in \R[X_0, \bar{X},Y,Z]
$$
which is bihomogeneous in $(X_0, \bar X)$ and $(Y, Z)$
(i.e. homogeneous in the variables $(X_0, \bar X)$ and $(Y, Z)$ separately). 

Since $H(x_0, \bar x, y, z) = h(\bar x, y, z)$ for every $(x_0, \bar x, y, z) \in 
\Delta_n \times C$, it is clear that $H\ge\frac12f^{\bullet}$ in $\Delta_n \times C$.

On the other hand, for each $(y, z) \in C$, we consider $H(X_0, X, y, z) \in 
\R[X_0, \bar X]$. Using Proposition \ref{lema:norma} we have 
$$
\begin{array}{rcl}
\|H(X_0, X,y,z) \| & 
\leq  & \displaystyle{\sum_{0 \leq i \leq m}   \sum_{0 \leq j \leq \ell} \|h_{ij}(\bar X)(X_0+\cdots+X_n)^{\ell-j}y^iz^{m-i} \|   }
\\[6mm]
& \leq & \displaystyle{ \sum_{0 \leq i \leq m}  \sum_{0 \leq j \leq \ell} \|h_{ij}(\bar X)(X_0+\cdots+X_n)^{\ell-j} \|   }
\\[6mm]
& \leq & \displaystyle{ \sum_{0 \leq i \leq m}  \sum_{0 \leq j \leq \ell} \|h_{ij}(\bar X) \| }  
\\[6mm] 
& \leq  & (m + 1)(\ell + 1)\| h \|_\bullet.
\end{array}
$$

We use now the bound for P\'olya's Theorem from \cite[Theorem 1] {PR}.
Take $N \in \N$ given by 
$$
N = \left\lfloor \frac{(m+1)(\ell+1)\ell(\ell-1)\|h\|_{\bullet}}{f^{\bullet}} - \ell \right\rfloor + 1.
$$
Then for each $(y,z) \in C$ we have that 
$
H\big(X_0,{\bar{X}},y,z\big)\left(X_0 + X_1 + \cdots + X_n \right)^N \in \R[X_0, \bar X]$
is a homogeneous polynomial such that all its coefficients are positive. 
More precisely, suppose we write
\begin{equation}\label{desig_8}
H\big(X_0,{\bar{X}},Y, Z\big)\left(X_0 + X_1 + \cdots + X_n \right)^N
= 
\sum_{\substack{\alpha=(\alpha_0,\bar \alpha) \in \N_0^{n+1} \\ |\alpha|=N+\ell}} 
b_{\alpha}(Y, Z) X_0^{\alpha_0} {\bar{ X}}^{\bar \alpha}
\in \R[X_0, \bar X, Y, Z]
\end{equation}
with $b_\alpha \in \R[Y, Z]$ homogeneous of degree $m$.
The conclusion is that for every $\alpha \in \N_0^{n+1}$ with $|\alpha|=N+\ell$, 
the polynomial $b_\alpha$ is positive in $C$, and therefore, 
since it is a homogenous polynomial, $b_\alpha$
is non-negative in $\R^2$. 

Before going on, we bound $N + \ell$ using (\ref{cota_e}) and (\ref{cota_h}) as follows.

\begin{equation}\label{eq:cota_Nmasell}
 \begin{split}
  N+ \ell & \leq \frac{(m+1)(\ell +1)\ell (\ell-1)\|h\|_{\bullet}}{f^{\bullet}}+1 \\
 & \leq \frac{(m+1)\ell^3\|h\|_{\bullet}}{f^{\bullet}}+1  \\
 & \leq 
c_5^3c_6(m+1)2^{\frac{m}2}
\left(\frac{ \| f \|_{\bullet}(m+1)d(d+1)}{{f^{\bullet}}}\right)^{4(c_1+1)}
{\rm e}^{c_7 \left( \frac{\| f \|_{\bullet}(m+1)d(d+1)}{f^{\bullet}} \right)^{c_1+1} }
  +1  \\
 & \leq 
c_8(m+1)2^{\frac{m}2}
\left(\frac{ \| f \|_{\bullet}(m+1)d(d+1)}{{f^{\bullet}}}\right)^{4(c_1+1)}
{\rm e}^{c_7 \left( \frac{\| f \|_{\bullet}(m+1)d(d+1)}{f^{\bullet}} \right)^{c_1+1} }
  \\
 \end{split}
\end{equation}
with $c_8 = c_5^3c_6 + 1$.

Now we substitute $X_0=1-X_1- \cdots -X_n$ and $Z =1$ in (\ref{desig_8})
and we obtain
\begin{equation} \label{eq:almost_finish}
f =  
\lambda\big(Y^2 + 1\big)^{\frac{m}2}\sum_{1 \le i \le s}g_i(g_i-1)^{2k} 
+
\sum_{\substack{\alpha=(\alpha_0,\bar \alpha) \in \N_0^{n+1} \\ |\alpha|=N+\ell}}  
b_{\alpha}(Y, 1) (1 - X_1 - \dots - X_n)^{\alpha_0} {\bar{ X}}^{\bar \alpha} \in \R[\bar X, Y].
\end{equation}
From (\ref{eq:almost_finish}) we want to conclude that $f \in M_{\R[\bar X, Y]}(g_1 \dots, g_s)$
and to find the positive constant $c$ such that the degree bound holds.

The first term on the right hand side 
of (\ref{eq:almost_finish}) clearly
belongs to $M_{\R[\bar X, Y]}(g_1 \dots, g_s)$. Moreover, 
for $1 \le i \le s$, 
\begin{equation}\label{eq:cota_1er_term}
\deg \big(Y^2 + 1\big)^{\frac{m}2}g_i(g_i-1)^{2k} \le m + (2k+1)\deg g_i.
\end{equation}

Now we focus on the second term on the right hand side 
of (\ref{eq:almost_finish}), which is itself a sum. Take a fixed 
$\alpha \in \N_0^{n+1}$ with $|\alpha|=N+\ell$.

Since 
$b_\alpha(Y, 1)$ is non-negative in $\R$, $b_\alpha(Y, 1) \in \sum \R[Y]^2$. 
Moreover, we can write $b_\alpha(Y, 1)$ as a sum of squares with 
the degree of each square bounded by $m$ (see \cite[Proposition 1.2.1]{Marshall_book}).

Also, take $v(\alpha) =  (v_0, \bar v) \in \{0, 1\}^{n+1}$ 
such that $\alpha_i \equiv v_i \, (\mbox{mod} \ 2)$ for $0 \le i \le n$. 
Denoting $g_0 = 1 \in \R[\bar X]$, by Remark \ref{obs:exis}, we have
$$
(1-X_1- \cdots -X_n)^{v_0} \bar{ X}^{\bar v} =  \sum_{0 \le i \le s}
\sigma_{v(\alpha) i} g_i, 
$$
with $\sigma_{v(\alpha)i} \in \sum \R[\bar X]^2$ for $0 \le i \le s$, and 
then 
$$
(1 - X_1 - \dots - X_n)^{\alpha_0} {\bar{ X}}^{\bar \alpha}
= 
(1 - X_1 - \dots - X_n)^{\alpha_0-v_0} {\bar{ X}}^{\bar \alpha - \bar v}
\sum_{0 \le i \le s}
\sigma_{v(\alpha) i} g_i
$$
belongs to $M(g_1, \dots, g_s)$
since $(1 - X_1 - \dots - X_n)^{\alpha_0-v_0} {\bar{ X}}^{\bar \alpha - \bar v}
\in \R[\bar X]^2$. 

We conclude in this way that each term in the sum belongs to $M_{\R[\bar X, Y]}(g_1, \dots, g_s)$.
In addition, 
for $0 \le i \le s$ we have
\begin{equation}\label{eq:cota_2do_term}
\deg 
b_{\alpha}(Y, 1) (1 - X_1 - \dots - X_n)^{\alpha_0-v_0} 
{\bar{ X}}^{\bar \alpha - \bar v} \sigma_{v(\alpha)i}g_i 
\le m + N + \ell + c_9
\end{equation}
with $c_9 = \max \{ \deg \sigma_{vi}g_i \ |
\ v \in \{0, 1\}^{n+1},  0 \le i \le s\}$.

To finish the proof, we only need to bound simultaneously the right hand side
of (\ref{eq:cota_1er_term}) and 
(\ref{eq:cota_2do_term}).

On the one hand, using (\ref{eq:cota_k}), 
\begin{equation*}
\begin{split}
m + (2k+1)\max_{1 \le i \le s} \deg g_i & \le 
m + \left(2 c_4 \left( \frac{\|\bar f \|_{\bullet}(m+1)d(d+1)}{f^{\bullet}} \right)^{c_1+1} + 1\right) 
\max_{1 \le i \le s} \deg g_i \\
& \le
c_{10}(m +1)  \left(\frac{\|\bar f \|_{\bullet}(m+1)d^2}{f^{\bullet}} \right)^{c_1+1}  
\end{split}
\end{equation*}
with $c_{10} = (2c_4 + 1)2^{c_1 + 1}\displaystyle{\max_{1 \le i \le s} \deg g_i}$, since 
$d \ge 1$. 

On the other hand, using (\ref{eq:cota_Nmasell}), 
\begin{equation*}
\begin{split}
m + N + \ell + c_9 &
\le 
m + 
c_8(m+1)2^{\frac{m}2}
\left(\frac{ \| f \|_{\bullet}(m+1)d(d+1)}{{f^{\bullet}}}\right)^{4(c_1+1)}
{\rm e}^{c_7 \left( \frac{\|\bar f \|_{\bullet}(m+1)d(d+1)}{f^{\bullet}} \right)^{c_1+1} } + c_9 \\
&
\le
c_{11}(m+1)2^{\frac{m}2}
\left(\frac{ \| f \|_{\bullet}(m+1)d^2}{{f^{\bullet}}}\right)^{4(c_1+1)}
{\rm e}^{c_{12} \left( \frac{\|f \|_{\bullet}(m+1)d^2}{f^{\bullet}} \right)^{c_1+1} }  \\
\end{split}
\end{equation*}
with $c_{11} = (1 + c_8 + c_9)2^{4(c_1+1)}$ and $c_{12} = c_72^{c_1 + 1}$, again since 
$d \ge 1$. 

Finally, we define $c$ as the positive constant obtained applying  Lemma \ref{lem:ctes} to 
the $6$-uple $(c_{10},$ $c_1+1,$ $c_{11}, 4(c_1 + 1), c_{12}, c_1 + 1)$.
\end{proof}

Before going to the proof of our main result, we include a technical lemma with a bound. 

\begin{lemma}\label{lem:techn_bound}
Let $j, d \in \N_0$ with $j \le d$. 
Then
$$2^j\binom{d+1}{j+1} \le 3^d.$$
\end{lemma}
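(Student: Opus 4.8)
The plan is to prove the statement $2^j\binom{d+1}{j+1}\le 3^d$ for all $j,d\in\N_0$ with $j\le d$ by a direct combinatorial/generating-function argument rather than by induction. The key observation is that the quantities $2^j\binom{d+1}{j+1}$, for $0\le j\le d$, are exactly (a subset of) the terms appearing in the binomial expansion of $(1+2)^{d+1}=3^{d+1}$. Concretely, $3^{d+1}=\sum_{i=0}^{d+1}\binom{d+1}{i}2^i$, and the term with $i=j+1$ equals $2^{j+1}\binom{d+1}{j+1}=2\cdot\bigl(2^j\binom{d+1}{j+1}\bigr)$. Since every term in that sum is non-negative, each individual term is at most the whole sum, so $2\cdot 2^j\binom{d+1}{j+1}\le 3^{d+1}=3\cdot 3^d$, which gives $2^j\binom{d+1}{j+1}\le \tfrac32\,3^d\le 3^d$ — wait, that last inequality is false, so this crude bounding loses too much and must be sharpened.

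Instead I would keep the factor of $2$ in play more carefully: from $2^{j+1}\binom{d+1}{j+1}\le 3^{d+1}$ we only get $2^j\binom{d+1}{j+1}\le \tfrac12\cdot 3^{d+1}$, which is worse than $3^d$. So the honest route is to bound the term against a sum that evaluates to $3^d$ rather than $3^{d+1}$. The plan is therefore: write $\binom{d+1}{j+1}=\binom{d}{j+1}+\binom{d}{j}$ (Pascal's rule, with the convention $\binom{d}{d+1}=0$), so that
$$
2^j\binom{d+1}{j+1}=2^j\binom{d}{j}+2^j\binom{d}{j+1}\le 2^j\binom{d}{j}+2^{j+1}\binom{d}{j+1}.
$$
Both summands on the right are of the form $2^i\binom{d}{i}$ with $0\le i\le d$, and these are precisely the terms of $3^d=(1+2)^d=\sum_{i=0}^{d}2^i\binom{d}{i}$, all non-negative. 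The two indices $i=j$ and $i=j+1$ are distinct, so their contributions together are at most the full sum $3^d$. This yields exactly $2^j\binom{d+1}{j+1}\le 3^d$.

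The only point requiring a little care is the boundary case $j=d$: then $\binom{d}{j+1}=\binom{d}{d+1}=0$, so the bound reads $2^d\binom{d+1}{d+1}=2^d\le 3^d$, which is the single term $i=d$ of $\sum_{i=0}^d 2^i\binom di$ and is clearly true; the argument above goes through verbatim once one adopts the standard convention $\binom{d}{d+1}=0$. The case $d=0$ forces $j=0$ and the inequality is $1\le 1$. There is no real obstacle here beyond choosing the right decomposition so as not to lose the factor governing $3^d$ versus $3^{d+1}$; the Pascal-rule splitting is what makes the terms land in the expansion of $3^d$ with distinct indices.
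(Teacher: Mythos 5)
Your proof is correct, and it takes a genuinely different route from the paper's. Both arguments start from the same Pascal-rule split $\binom{d+1}{j+1}=\binom{d}{j}+\binom{d}{j+1}$, but from there the paper proceeds by induction on $d$: it writes $2^j\binom{d+1}{j+1}=2\cdot 2^{j-1}\binom{d}{j}+2^j\binom{d}{j+1}$, bounds each summand by $3^{d-1}$ via the inductive hypothesis (applied at $(j-1,d-1)$ and $(j,d-1)$), and handles $d=0$, $j=0$, $j=d$ as base and boundary cases by direct inspection. You instead avoid induction altogether by slightly enlarging the second summand to $2^{j+1}\binom{d}{j+1}$ and observing that both resulting terms are distinct nonnegative summands of the binomial expansion $3^d=\sum_{i=0}^{d}2^i\binom{d}{i}$, so their sum is at most $3^d$; the edge case $j=d$ is absorbed by the convention $\binom{d}{d+1}=0$. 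Your version is a bit cleaner in that it needs no induction and no separate treatment of $j=0$ or $j=d$, at the cost of the one extra observation that the relevant terms can be identified inside the expansion of $(1+2)^d$. Either proof is fine; yours arguably makes the source of the constant $3$ more transparent.
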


\begin{proof}{Proof:} We proceed by induction on $d$. 
For $d = 0$ we check the inequality by hand. Now suppose $d \ge 1$.
If $j = 0$ or $j = d$ again we check the inequality by hand. If $1 \le j \le d-1$ then 
$$
2^j\binom{d+1}{j+1} 
= 2\cdot 2^{j-1}\binom{d}{j} + 2^j\binom{d}{j+1} \le 2\cdot 3^{d-1} + 3^{d-1} = 3^d.
$$
\end{proof}

We are ready to prove Theorem \ref{thm:main}.

\begin{proof}{Proof of Theorem \ref{thm:main}:}
We consider the affine change of variables $\ell:\R^n \to \R^n$ given by
$$
\ell(X_1, \dots, X_n) = \left(\frac{X_1 + 1}{2n}, \dots, 
\frac{X_n + 1}{2n}\right).
$$
For $0 \le i \le s$, we take 
$\tilde{g}_i(\bar X)=g_i(\ell^{-1}(\bar X)) \in \R[\bar X]$ and we define
$$
\widetilde{S}= \{\bar x \in \R^n \ | \ \tilde{g}_1(\bar x) \geq 0, \dots,  
\tilde{g}_s(\bar x) \ge 0 \}.
$$
It is easy to see that
$$
\emptyset \ne \widetilde{S} = \ell(S) \subseteq \widetilde{\Delta}_n^{\circ}.
$$
Moreover, since $M(g_1, \dots, g_n)$ is archimedean, 
$M(\tilde g_1, \dots, \tilde g_s)$ is also archimedean. To see this, following
the proof of \cite[Proposition 5.2.3]{Marshall_book}, if 
$$
N - X_1^2 - \dots - X_n^2  \in M(g_1, \dots, g_n),
$$
then
$$
\frac{N}{2n^2} + \frac{1}{2n}- \left(\frac{X_1+1}{2n}\right)^2 - \dots - \left(\frac{X_n+1}{2n}\right)^2 =
$$
$$
= \frac1{2n^2}\left(N - X_1^2 - \dots - X_n^2\right) +
\frac1{4n^2}\left( (X_1 - 1)^2 + \dots + (X_n - 1)^2 \right) \in M(g_1, \dots, g_n)
$$
and composing with $\ell^{-1}$
we have
$$
\frac{N}{2n^2} + \frac{1}{2n} - X_1^2 - \dots - X_n^2  \in M(\tilde g_1, \dots, 
\tilde g_n).
$$

Let 
$f \in \R[\bar X, Y]$ be as in the statement of Theorem \ref{thm:main}
and let $\tilde{f}(\bar{X},Y)=f(\ell^{-1}(\bar{X}),Y) \in \R[\bar X, Y]$.
It can be easily seen that
$\tilde f$ is positive on $\tilde S \times \R$, 
$\deg_{\bar X}\tilde f = \deg_{\bar X} f = d$, 
$\deg_{Y}\tilde f = \deg_{Y} f = m$, $\tilde f$ is fully $m$-ic on $\widetilde S$
and
$$
\min\{\bar{\tilde f}(\bar x, y, z) \,  | \, \bar x \in \widetilde S, (y, z) \in C \} 
= \min\{\bar{f}(\bar x, y, z) \,  | \, \bar x \in  S, (y, z) \in C \} = f^{\bullet} >0, 
$$ 

Now we want to bound $\| \tilde f \|_{\bullet}$.
So suppose 
$$
f= 
 \sum_{0\leq i \leq m} 
 \sum_{\substack{\alpha \in \N_0^{n} \\ |\alpha|\le d}} \binom{|\alpha|}{\alpha} a_{\alpha,i} {\bar X}^{\alpha} Y^i.
$$
Then 
\begin{equation*}
 \begin{split}
 {\tilde{f}}&= 
 \sum_{0\leq i \leq m} \sum_{\substack{\alpha \in \N_0^{n} \\ |\alpha|\le d}} \binom{|\alpha|}{\alpha} a_{\alpha,i} (2nX_1-1, \cdots ,2nX_n-1)^{\alpha}  Y^i \\[1mm]
 &= 
 \sum_{0\leq i \leq m} \sum_{\substack{\alpha \in \N_0^{n} \\ |\alpha|\le d}} 
 \binom{|\alpha|}{\alpha} a_{\alpha,i}  
 \sum_{\substack{\beta \in \N_0^{n} \\ \beta \preceq \alpha}}
 \binom{\alpha_1}{\beta_1} \cdots \binom{\alpha_n}{\beta_n}
  (2n)^{|\beta|}(-1)^{|\alpha|-|\beta|} \bar X^{\beta} Y^i  \\[1mm]
 &= 
 \sum_{0\leq i \leq m} \sum_{\substack{\beta \in \N_0^n \\ |\beta| \leq d}} (2n)^{|\beta|} 
 \sum_{\substack{ \alpha \in \N_0^n  \\ \beta \preceq \alpha, \, |\alpha| \leq d}}
 \binom{|\alpha|}{\alpha} \binom{\alpha_1}{\beta_1} \cdots \binom{\alpha_n}{\beta_n}
 a_{\alpha,i}  (-1)^{|\alpha|-|\beta|} \bar X^{\beta} Y^i \\[1mm]
 & = \sum_{0\leq i \leq m} \sum_{\substack{\beta \in \N_0^n \\ |\beta| \leq d}} 
 \binom{|\beta|}{\beta} (2n)^{|\beta|} \frac{1}{|\beta|!} 
 \sum_{\substack{ \alpha \in \N_0^n  \\ \beta \preceq \alpha, \, |\alpha| \leq d}}
 \frac{|\alpha|!}{ (\alpha_1-\beta_1)! \cdots (\alpha_n-\beta_n)!}a_{\alpha,i}  (-1)^{|\alpha|-|\beta|} \bar X^{\beta} Y^i
 \end{split}
\end{equation*}
For $0 \le i \le m$ and $\beta \in \N_0^n$ with $|\beta| \le d$ we define
$$
b_{\beta, i} = (2n)^{|\beta|} \frac{1}{|\beta|!} 
 \sum_{\substack{ \alpha \in \N_0^n  \\ \beta \preceq \alpha, \, |\alpha| \leq d}}
 \frac{|\alpha|!}{ (\alpha_1-\beta_1)! \cdots (\alpha_n-\beta_n)!}a_{\alpha,i}  (-1)^{|\alpha|-|\beta|}\in \R
$$ 
Then we have 
\begin{equation*}
 \begin{split}
|b_{\beta, i} | 
& \leq \| f\|_{\bullet}(2n)^{|\beta|} \frac{1}{|\beta|!}  
\sum_{\substack{ \alpha \in \N_0^n  \\ \beta \preceq \alpha, \, |\alpha| \leq d}}
\frac{|\alpha|!}{ (\alpha_1-\beta_1)! \cdots (\alpha_n-\beta_n)!} 
\\[1mm]
& = \| f\|_{\bullet}(2n)^{|\beta|} \frac{1}{|\beta|!}  \sum_{|\beta| \leq h \leq d} h! 
\sum_{\substack{ \alpha \in \N_0^n  \\ \beta \preceq \alpha, \, |\alpha| = h}}
\frac{1}{ (\alpha_1-\beta_1)! \cdots (\alpha_n-\beta_n)!} 
\\[1mm]
& = \| f\|_{\bullet}(2n)^{|\beta|} \frac{1}{|\beta|!}  \sum_{|\beta| \leq h \leq d}  \frac{h!}{(h-|\beta|)!}
\sum_{\substack{ \gamma \in \N_0^n \\ |\gamma|=h-|\beta|}}
\binom{|\gamma|}{\gamma}  
\\[1mm]
& =\| f\|_{\bullet}(2n)^{|\beta|}   \sum_{|\beta| \leq h \leq d}  \binom{h}{|\beta|}
n^{h-|\beta|} \\[1mm]
& \le  
\| f\|_{\bullet} 2^{|\beta|}n^{d} \sum_{|\beta| \leq h \leq d}  \binom{h}{|\beta|}
\\[1mm]
& =
\| f\|_{\bullet} 2^{|\beta|}n^{d} \binom{d+1}{|\beta| + 1}
\\[1mm]
&\leq \| f\|_{\bullet} (3n)^d
 \end{split}
\end{equation*}
using Lemma \ref{lem:techn_bound}. So the conclusion is $\| \tilde f \|_{\bullet}
\le \| f \|_{\bullet}(3n)^d$.

Finally, take $c$ as the positive constant from Proposition \ref{prop:main} applied to 
$\tilde g_1, \dots, \tilde g_s$. Therefore, 
$\tilde f$ can be written as 
$$
\tilde f = \tilde \sigma_0 + \tilde \sigma_1\tilde g_1 + \dots + \tilde\sigma_s \tilde g_s \in 
M_{\R[\bar X, Y]}(\tilde g_1, \dots, \tilde  g_s)
$$
with $\tilde\sigma_0, \tilde\sigma_1,  \dots, \tilde\sigma_s \in \sum \R[\bar X, Y]^2$ and 
$$
\deg(\tilde\sigma_0), \deg(\tilde\sigma_1 \tilde g_1),  \dots, \deg(\tilde\sigma_s\tilde g_s)
\le 
c (m+1)2^{\frac{m}{2}}
{\rm e}^{\left( \frac{\|  f \|_{\bullet}(m+1)d^2(3n)^d}{f^{\bullet}} \right)^{c}}
$$
and the desired representation for $f$ is simply obtained by composing with $\ell$. 
\end{proof}

\begin{remark}\label{obs:3^d}
If $n \ge 2$, 
the factor 
$3^d$ in the exponentiation
in 
the bound from Theorem \ref{thm:main}
can be hidden in the constant $c$ as follows. If $d = 0$, as explained before, 
the degree bound holds for any $c \ge 1$. If $d \ge 1$, then, 
using Lemma \ref{lem:norma_div_min},
$$
\frac{\|  f \|_{\bullet}(m+1)d^2(3n)^d}{f^{\bullet}} \le
\frac{\|  f \|_{\bullet}(m+1)d(d+1)(3n)^d}{f^{\bullet}} 
\le 
$$
$$
\le 
\left(\frac{\|  f \|_{\bullet}(m+1)d(d+1)n^d}{f^{\bullet}}\right)^3  \le 
8 \left(\frac{\|  f \|_{\bullet}(m+1)d^2n^d}{f^{\bullet}}\right)^3.
$$
So we replace $c$ by the result of applying Lemma \ref{lem:ctes} to 
the $6$-uple $(1, 0, c, 0, 8^c, 3c)$ and we are done.  
\end{remark}

\textbf{Acknowledgements:} We are thankful to the reviewers for their
helpful
suggestions and remarks.

\end{document}